\newtheorem{thm}{Theorem}[section]%[chapter]%[section]%!!the counter really doesn't follow the chapters!!
\newtheorem{Prop}[thm]{Proposition}
\newtheorem{lem}[thm]{Lemma}
 \newtheorem{Rmk}[thm]{Remark}
 \newtheorem{Lem}[thm]{Lemma}
  \newtheorem{Def}[thm]{Definition}
 \definecolor{blu}{rgb}{0.0, 0.28, 0.67}
 \definecolor{orange}{rgb}{0.89, 0.26, 0.2}
\def\R {\mathbb{R}}
\def\de {{\partial}}
\def\eps {{\varepsilon}}
\newcommand{\Div}{\operatorname{div}}
\newcommand{\rot}{\operatorname{rot}}
\newcommand{\ba}{\begin{aligned}}
\newcommand{\ea}{\end{aligned}}
\newcommand{\be}{\begin{equation}}
\newcommand{\ee}{\end{equation}}
\providecommand{\keywords}[1]{\textbf{\textit{Keywords:}} #1}
\numberwithin{equation}{section}
\begin{document}%%%%%%%%%%%%%%%%%%%%%%%%%%%%%%%%
%%%%%%%%%%%%%%%%%%%%%%%%%%%%%%%%%%%%%%%%%%%%%%%%

% Vous pouvez mettre dans la prochain ligne la rubrique choisie
% (si vous la connaissez) - meme deux, format : Rubrique1/Rubrique2
\title{Nonresonant Bilinear Forms \\for partially dissipative hyperbolic systems\\ violating the Shizuta-Kawashima condition}
\author{Roberta Bianchini 
 \footnote{IAC, Consiglio Nazionale delle Ricerche, via dei Taurini 19, 00185 Rome (Italy)  {\color{blue} {\tt roberta.bianchini@cnr.it}}}, 
Roberto Natalini
 \footnote{IAC, Consiglio Nazionale delle Ricerche, via dei Taurini 19, 00185 Rome (Italy) {\color{blue} {\tt roberto.natalini@cnr.it}}}
}
\date{}

%\null
\maketitle 
%\tableofcontents
\begin{abstract} 
In the context of hyperbolic systems of balance laws, the Shizuta-Kawashima coupling condition guarantees that all the variables of the system are dissipative even though the system is not totally dissipative. Hence it plays a crucial role in terms of sufficient conditions for the global in time existence of classical solutions. However, it is easy to find physically based models that do not satisfy this condition, especially in several space dimensions. In this paper, we consider two simple examples of partially dissipative hyperbolic systems violating the Shizuta-Kawashima condition ([SK]) in 3D, such that some eigendirections do not exhibit dissipation at all. We prove that, if the source term is non resonant (in a suitable sense) in the direction where dissipation does not play any role, then the formation of singularities is prevented, despite the lack of dissipation, and the smooth solutions exist globally in time. The main idea of the proof is to couple Green function estimates for weakly dissipative hyperbolic systems with the space-time resonance analysis for dispersive equations introduced by Germain, Masmoudi and Shatah. More precisely, the partially dissipative hyperbolic systems violating [SK] are endowed, in the non-dissipative directions, with a special structure of the nonlinearity, the so-called Nonresonant Bilinear Form for the wave equation (see Pusateri and Shatah, CPAM 2013).
\end{abstract}

\keywords{Partially dissipative hyperbolic systems, entropy inequality, Shizuta-Kawashima condition, space-time resonances, dispersion, null condition, null forms, Nonresonant Bilinear Forms.} 

\section{Introduction}\label{sec:intro}
The Cauchy problem for hyperbolic balance laws in several space dimensions takes the form

\begin{align}
& \partial_t u + \sum_{i=1}^n \partial_{x_i} F_i(u)=G(u), \label{cauchy}\\
& u(x,0)=u_0(x).\label{initial}
\end{align}
Here the unknown is a vector $u=(u_1, \cdots, u_k) \in\R^k$, the fluxes are given by smooth functions $F=(F_1,\dots,F_n)$, and the source term $G(u) \in \R^k$ is also smooth.

Several fundamental laws of physics enter this framework: for the relevance
of hyperbolic conservation laws in continuum physics we refer to \cite{whitham, majda, dafermos}. 
A peculiar feature of nonlinear hyperbolic systems is the possible loss of regularity. Even with smooth initial data, it is well-known that the solution can develop shocks in finite time. As a matter of fact, in many cases  solutions to a general system \eqref{cauchy}--\eqref{initial}  fail to be smooth, because some of their components or their gradients blow up pointwise in finite time. Explicit examples of this phenomenon are illustrated for instance in
\cite{majda, John2}. Nevertheless, there are some specific classes of hyperbolic systems, whose solutions stay smooth for all times, at least for small initial data. The simplest physical mechanism preventing the formation of singularities is dissipation, one of the more relevant hyperbolic dissipative models with global smooth solutions being 
the compressible Euler system with damping \cite{nishida,sideris}. 
More general cases have been considered:
in \cite{HN, yong, SK} a rigorous framework was proposed to characterize a quite general class of partially dissipative hyperbolic systems whose smooth solutions are global in time. We try here to explain the guidelines of the development of this theory. We start with a physically reasonable class of systems endowed with a convex entropy $\eta(u)$ and related entropy flux $\mathcal{G}(u)=(\mathcal{G}_j(u))_{j=1, \cdots,n}$, such that
\begin{align*}
\eta=\eta(u), \quad \text{with} \quad  \eta'(u) \cdot F_j'(u)=\mathcal{G}'_j(u), \quad j=1, \cdots, n.
\end{align*}
The simplest preliminary condition that one can require is \emph{entropy dissipation}. We remind to \cite{chen, HN} and references therein for an exhaustive discussion. This condition can be stated as follows. Consider an equilibrium value $\bar u$ for system \eqref{cauchy}, i.e. such that $G(\bar u)=0$. For every $u$ in a given domain $\Omega \subset \R^k$, the following inequality has to be satisfied:
\begin{equation}\label{entropy}
(\eta'(u) -\eta'(\bar u) ) \cdot G(u)\leq 0.
\end{equation}
Unfortunately, even in very simple cases, it is possible to see that condition \eqref{entropy} alone is not enough to prevent the appearence of singularities in finite time. Therefore, an additional assumption is needed. Hence, together with entropy dissipation \eqref{entropy}, one requires the so-called Shizuta-Kawashima condition, which has been originally introduced in \cite{SK} and can be presented as follows:\\

[SK] {\it Shizuta-Kawashima}. Given an equilibrium value $\bar u$ for system \eqref{cauchy}, if $z\in Ker G'(\bar u)$, then there is no vector $\xi=(\xi_1,\cdots,\xi_n)\in \R^n-\{0\}$ such that $z$ is an eigenvector of the matrix $\sum_{i=1}^n  F'_i(\bar u)\xi_i$.\\

This condition can be read in many different ways. In terms of stability,
it guarantees the necessary coupling between conserved/non conserved quantities
in order to have dissipation in each state variable. Hence, the [SK] coupling assures that the amount of dissipation provided by the entropy inequality \eqref{entropy} is distributed in all the directions, and this framework is \emph{sufficient} to guarantee the well-posedness of classical solutions to \eqref{cauchy}-\eqref{initial} for all times.
In \cite{HN, yong} it is indeed proved that, under both entropy dissipation and the Shizuta-Kawashima conditions, and for small initial data, problem \eqref{cauchy}-\eqref{initial} has global in time smooth solutions.
However, there are many physical
systems, especially in the multidimensional case, which have dissipative entropies as in \eqref{entropy}, but do not satisfy the coupling condition [SK]. An interesting example is the
model for gas dynamics  in rotational and vibrational non-equilibrium in Eulerian coordinates in $n$-space dimensions, which reads
\begin{equation}\label{gasdynamics}
\begin{cases}
\partial_t \rho + \Div (\rho u)=0,\\
\partial_t (\rho u) + \Div (\rho u\otimes u) +\nabla P(\rho,e)=0,\\
\partial_t \left( \rho (\frac 1 2 |u|^2 + e + q)\right) + \Div \left( (\frac 1 2 \rho |u|^2 + \rho e + \rho q + P(\rho,e))u\right)=0,\\
\partial_t (\rho q)+ \Div (\rho q u)=\frac{1}{\tau} \left(\rho(Q(e)-q)\right),
\end{cases}
\end{equation}
where $\rho$, $u$, $e$, and $q$ are respectively the density, the velocity, the internal energy and the internal vibrational energy of the gas, and $\tau$ is the relaxation time for the rotational and vibrational modes, see for instance \cite{whitham} for further details on the modeling. The last equation models the fact that, during rapid changes in the flow, the internal energy $e$ may lag behind the equilibrium value corresponding to the ambient pressure and density. On the other hand, the translational energy reaches the equilibrium faster than the rotational and vibrational energies. It is easy to show that this system does not satisfy the Shizuta-Kawashima condition in all space dimensions (see for instance \cite{HN} for the 1D case). Nevertheless, in the one-dimensional case, Y. Zeng in \cite{zeng} was able to obtain a global existence result using Lagrangian coordinates, while the multidimensional case is still open.
Another interesting physical model which enters the class of dissipative hyperbolic systems violating [SK] is represented by the nonlinear Maxwell equations
\begin{equation}\label{KD}
\begin{cases}
\partial_t D - \rot H= 0,\\
\partial_t B + \rot E= 0, \\
\Div D=\Div B=0,
\end{cases}
\end{equation}
where $E$ and $H$ are the electric and magnetic field respectively, and $D$ and $B$ are the related displacements, given by
$$ B=\nu_0H,\ D=\varepsilon_0 (1+\chi)E  , $$
with $\varepsilon_0\chi E$ which stands for a polarization term. In the so-called Kerr-Debye model, the term $\chi$ is given by a relaxation equation: 
\begin{equation}\label{pol}
\partial_t \chi=\frac{1}{\tau} \left(\varepsilon_r |E|^2 - \chi\right),
\end{equation}
{where $\eps_r>0$ is the dielectric constant of the Maxwell equations.}
For further details on the modeling aspects, see for instance \cite{zio}. It is shown in \cite{HN}, see also \cite{CH, CHN} and references therein, that when the electric field vanishes, the [SK] condition is no longer satisfied. Nevertheless, in \cite{CHN} global existence of smooth solutions is proved for all small initial data. 

Finally, we mention two other relevant examples of systems violating [SK]: the hierarchy of equations coming from the kinetic
formulation of multi-branch entropy solutions of scalar conservation laws \cite{brenier,BCN}, and a very simple example on traffic flow presented in \cite{li}.

\paragraph{{Some attempts to go beyond the Shizuta-Kawashima condition.}}
Although several physical models violate the Shizuta-Kawashima condition [SK], at the present time there is no general theory which allows to go beyond [SK] still providing global in time existence of smooth solutions to multidimensional dissipative hyperbolic systems \eqref{cauchy}. This condition turns out to be indeed too restrictive and only some special cases, of which \cite{zeng} is the first example, have been treated so far.
To overcome this problem, one first direction is to weaken the requirements on the linearized part of the system, as started in \cite{BZ}. 
The refined linear analysis of \cite{BZ} applies to systems for which [SK] fails on sets of the frequency space of zero Lebesgue measure. Although being strictly weaker than [SK], this requirement still enables a certain decay rate for every variable of the linear system. This slower linear decay is then employed by the authors of \cite{BZ} to deal with a special class of nonlinear systems. {Among the examples of nonlinear partially dissipative system for which global existence of smooth solutions for small data has been achieved, we mention the regularity-loss type equations and in particular the Timoshenko system, see \cite{timo}.} However, this is again a special class of equations.
The (nonlinear) framework and also the point of view of the present manuscript is completely different: in particular, we deal with models in which some of the variables are not dissipated at all and this lack of dissipation persists in any point of the frequency space. Since there is not any dissipation in some directions, a key role is played here by the structure of the nonlinear term.
The strategy is to bypass the failure of the [SK] condition on the linearized part of the system, by taking advantage of some special features of the nonlinear terms. We recall indeed that an unsolved conjecture by Majda (see \cite{majda}) states that \emph{the Cauchy problem for the symmetric hyperbolic system with totally linearly degenerate characteristics admits a global, classical solution unless the solution itself blows up in ﬁnite time when the initial data belong to Sobolev spaces $H^s$ for $s \ge  1+\frac n 2$.}
Our idea is to rely on this property for partially dissipative hyperbolic systems, and in particular for the directions where there is no dissipation. The first significant obstacle to overcome is to find a framework which allows to unify the (dissipative) contributions from some eigendirections and the good nonlinear structure (in terms of linear degeneracy) of the complementing directions. 

 A first attempt to implement this approach in a very specific case is contained in \cite{MN}, where, for a simple class of one-dimensional hyperbolic systems, \emph{linear degeneracy counterbalances the lack of dissipation in preventing shocks}. Here we try to make a step forward in this last direction. We consider a partially dissipative hyperbolic system violating [SK] in some directions (and for every point of the frequency space), where the equations for the non-dissipative variables have a (nonlinear) source term with a special structure, i.e. a suitable generalization of linear degeneracy, which prevents  singularity formation even though in some directions there is no dissipation at all.

As a strong motivation for our work, the latter is a natural framework for many interesting physical models. We refer again to 
systems \eqref{gasdynamics} and \eqref{KD}, and to the class of equations considered in \cite{MN}: all these are examples where linear degeneracy occurs only at the points where [SK] fails and, at the same time, global in time smooth solutions in 1D exist. As a matter of fact, physically interesting models can exhibit a special structure of the nonlinearity which is able to prevent singularity formation: it is then worth investigating in this direction, trying to provide a more general classification of the ``good'' nonlinear terms and taking advantage of them.

We now need a step back to provide a brief treatment of nonlinear equations with special nonlinear non-dissipative terms, such that no breakdown occurs.

\paragraph{{Global smooth solutions for small data to dispersive equations: the role of resonances and null structures.}}
The investigation on global existence of smooth solutions to multidimensional hyperbolic, and more generally dispersive, equations with quadratic source terms and small initial data takes its roots around the Eighties. A deep historical and mathematical survey on the topic can be found in \cite{Lannes}. A general common feature of a series of paper is that the linear dispersive terms in the considered equations tend to force the solution to spread and to decay, and the contribution of the nonlinear terms is controlled by dispersion. Since dispersion increases with space dimension, a first class of global existence results has been obtained in dimension $n=4$ by S. Klainerman \cite{Klainerman0}. As showed by F. John, \cite{John}, in lower space dimensions the nonlinearity can lead to blow up in finite time for arbitrarily small data. In this case, a precise structure of the nonlinearity, the so-called \emph{null condition}, introduced by Klainerman \cite{Klainerman1} and Christodoulou \cite{C}, prevents the formation of singularities, at least in the case of systems of nonlinear wave equations and some related examples. \\
More recently, a crucial contribution to extend the notion of null forms for the wave equation, in the framework of the space-time resonance analysis introduced by Germain, Masmoudi and Shatah, was given by Pusateri and Shatah, see \cite{Pusateri}. The main idea of the space-time resonance method is the following: instead of focusing only on time resonances (in other words, instead of dealing with plane waves), one looks also at spatial localization (wave packets) by using space-weighted estimates. A highlighting survey paper is due to Germain, \cite{Germain}. Normal form methods deal with time resonances, while null forms guarantee spatial localization, so preventing space resonances. Each one tells only a part of the story. A coupling of these two points of view is what is needed to complete the picture.\\
In the context of long-time behavior for the wave equation with space-time resonance techniques, we refer to a recent paper by Pusateri and Shatah, \cite{Pusateri}. Notice that, quadratic source terms satisfying the null condition for the wave equation are actually equivalent to the compatible forms for hyperbolic systems, \cite{HJ}, which are the ones having the weakly sequential continuity described by compensated compactness, \cite{Tartar1}. A preliminary result for hyperbolic systems is proved in \cite{GLZ}. Actually, the first result in this direction is due to an unpublished paper by Tartar \cite{Tartar2}, where he proved global existence of smooth solutions in 1D to a semilinear hyperbolic system with quadratic source satisfying a non-crossing condition for the characteristics (which is actually equivalent to the null condition for this particular case). This paper has been recently revisited in \cite{BG}.
We only recall that the classical example of wave equation with null condition, in 1D for simplicity, 
$$\de_{tt} u - c^2\de_{xx}u=(\de_tu)^2-(\de_xu)^2,$$
can be rewritten as the first order hyperbolic system
\begin{align}\label{eq:tartar}
\de_t u_1 + c \de_x u_1&=u_1u_2, \notag\\
\de_t u_2 - c \de_x u_2&=u_1u_2,
\end{align}
which is the simplest one among the semilinear hyperbolic systems satisfying the Tartar non-crossing condition and covered by his result \cite{Tartar1}. 

\paragraph{{Linear degeneracy and null structures in 1D: John's formula.}}
As already pointed out in \cite{John2} (see also \cite{sideris2, ferapontov, CHN}), linearly degenerate eigenvalues are strongly connected with the null condition. A clear indication of that is provided for instance by the systems
\eqref{gasdynamics} and \eqref{KD} discussed before. These two examples have eigenvalues which are linearly degenerate in the points where [SK] is not satisfied and their smooth solutions are well behaved for all times in 1D. Moreover, at least in the one-dimensional case this connection between \emph{linear degeneracy} and \emph{null forms} appears to be clear by means of \emph{John's decomposition formula}, \cite{John}. Consider a quasi-linear hyperbolic system in one space dimension,
$$\de_t u + A(u) \de_x u=0.$$
Thanks to John \cite{John}, one can derive the following decomposition
\begin{align}\label{eq:johndec}
\de_t p_i + \lambda_i(u) \de_x p_i = - \sum_{k=1}^N p_i p_k \lambda_i'(u) r_k(u) + \sum_{j, k=1}^N(\lambda_k(u)-\lambda_j(u))p_j p_k^t l_i(u) r_j'(u)r_k(u), \quad p_j=(l_j)^T \de_x u,
\end{align}
where $\lambda_i(u)$ are the eigenvalues of $A(u)$ and $l_j, r_j$ are the related left and right eigenvectors. Now, if the considered system is \emph{totally linearly degenerate} (TLD), then \emph{there is no diagonal term in the  above quadratic form, namely there is no term of the form $p_k^2$}. This observation is very well explained in \cite{CH}. It is really a generalization of the Tartar non-crossing condition \cite{Tartar2}, which was stated for semilinear systems, to the quasilinear case. The equivalence with the null condition is showed for instance in \cite{BG} for the classical example of the wave equation. In the 1D case, it provides a recipe to extend the toolbox on null forms which has been developed for the wave equation (see \cite{John, Klainerman1, Pusateri} and references therein) to the case of hyperbolic systems. In general, this is far from being obvious: as pointed out by John in the context of almost global solutions for the wave equation with null structure in \cite{John} ``we have heavily relied on the invariance properties of the operator $\de_{tt}-\de_{xx}$. This makes it difficult to generalize the result to other quasilinear equations that linearize to a homogeneous hyperbolic system''. Of course there are special cases where the specific structure of the considered hyperbolic system simplifies the analysis, like the case of diagonal systems, and this includes for instance some incompressible fluid-dynamics models, and in particular it is known that compressible and irrotational flows can be expressed as a second-order quasilinear wave equation with respect to the velocity potential, see \cite{BSV} and references therein. 

\paragraph{{Our approach: the interplay between partial dissipation and null structures.}}
Even though a general theory is still missing in the multiD case, the properties of quadratic null structures, which are well-known for the wave equation, can be translated in the quite general framework of hyperbolic systems in 1D and at least in some special cases in 2D and 3D, see \cite{Crist}. Therefore, this gives hope that a similar property could persist for multiD hyperbolic systems, where the theory of global smooth solutions for small data in the general case is based on the role of dissipation (the [SK] condition, \cite{BHN}). In other words,  this strongly suggests that a more careful study of the interplay between dissipation and linearly degenerate fields, and more generally the null condition, could give at least some deeper insights on the long-time behavior of multidimensional hyperbolic systems.
It is also worth pointing out that a specific (and in general weaker) form of null structure has been recently found for the 3D compressible Euler equations by Luk \& Speck \cite{speck}. That result has already been implemented in \cite{wei} to prove the global in time existence for small data of the 3D compressible irrotational Euler equations for Chaplygin gases, whose nonlinear term satisfies the classical Klainerman null condition \cite{Klainerman1}, provided that the system is rewritten in terms of two coupled quasilinear wave equations (actually they are geometric wave equations, as they rely on the geometric wave operator). 
Using that null structure for irrotational Chaplygin gases, in light of our preliminary study carried out in the course of this paper, it is reasonable to conjecture that for instance irrotational Chaplygin gases out of rotational and vibrational equilibrium (system \eqref{gasdynamics} in the irrotational case for Chaplygin gases) admits global in time smooth solutions for small data. Theoretically, this is indeed a consequence of our analysis and our results presented below. There is however a non-negligible obstruction, which is represented by the fact that the null property of the 3D compressible Euler equations in that specific setting has been proved by relying on the classical Klainerman framework of commuting vector fields and geometric wave operators \cite{Klainerman1}. In order to exploit the partial dissipation and to couple this property of the \emph{linearized equations} with the null form of the \emph{nonlinear terms} instead, our intuition is that at least the simplest methodology is based on Fourier analysis and the reformulation/extension of Klainerman's method which is known as space-time resonance analysis, which is introduced below. Therefore the analysis of system \eqref{gasdynamics} for irrotational Chaplygin gases can be carried out once the null structure has been translated in Fourier space, without using vector fields and geometric wave operators.  Although being a strong motivation, the extension of our method to that system is out of the scope of the present paper and will be investigated in a future work. {The Kerr-Debye system \eqref{KD} is even closer to the toy model (the second one) that we investigate in this paper %In fact it can be checked that even though that system does not satisfy [SK], its quadratic sources are either quadratic in terms of a dissipative variable (as the sources of the first two equations in \eqref{PK_system_w}) or they satisfy the Tartar non-crossing condition - the classical null condition. 
and its three dimensional version is probably the first example to which our method could be applied. In fact, it is known (see \cite{CH} and references therein) that in 3D, system \eqref{KD} in the variables $(D, H, \chi) \in \mathbb{R}^7$ with $\chi \ge 0$ is a quasilinear symmetrizable hyperbolic system with eigenvalues, for $\xi \neq 0$, given by
$$\lambda_1(\chi, \xi)=\lambda_2 < \lambda_3 = \lambda_4 = \lambda_5 =0 < \lambda_6 = \lambda_7 = - \lambda_1.$$
Moreover, each eigenvalue is linearly degenerate: it is enough to check this property for $\lambda_1$, which gives $\nabla \lambda_1 \cdot r_1=0$, where $r_1$ is the corresponding right eigenvector. Since the linear degeneracy is also valid in 1D, in that case one can apply John's decomposition formula \eqref{eq:johndec} and obtain that there is no any \emph{diagonal} quadratic term (thanks to the linear degeneracy), so that the system satisfies the classical null condition (or Tartar non-crossing condition as in system \eqref{eq:tartar}). In 1D, this is in fact the reasoning on which the proof of global in time existence for small data in \cite{CH} is based. Such a proof cannot be extended directly to the 3D case as the John decomposition formula is not known to be valid in 3D. However, a connection between linear degeneracy of eigenvalues and nonresonant bilinear forms should be valid in the 3D case. 
To motivate this intuition, let us apply the change of variables $\mathcal{D}=\frac{D}{1+\chi}$ to system \eqref{KD} where we assume $\eps_0=\eps_r=\nu_0=1$ for simplicity. Recalling that $|E|^2=\frac{|D|^2}{(1+\chi)^2}$, system \eqref{KD} reads
\begin{align*}
\de_t \mathcal{D}- \frac{\rot H}{1+\chi}&= -\frac{\mathcal{D}}{\tau (1+\chi)}(|\mathcal{D}|^2 - \chi), \\
\de_t H + \rot \mathcal{D}&=0, \\
\de_t \chi&=\frac{1}{\tau}(|\mathcal{D}|^2-\chi), \\
\rm{div} H&=\rm{div} D=0. 
\end{align*}
The first two equations can be written in the form of a wave equation as follows
\begin{align*}
\de_{tt} H + \rot \left(\frac{\rot H}{1+\chi}\right) &=\rot \left( \frac{\mathcal{D}}{\tau (1+\chi)}(|\mathcal{D}|^2 - \chi)\right).
\end{align*}
The [SK] condition is known to fail (\cite{CH}) when $E=0$. Note that as $D=(1+\chi)E$, then $E=0$ implies $D=0$. Therefore the point here is to prove that, when $D=0$, the source term of the following wave equation
\begin{align*}
\de_{tt} H + \frac{\rot^2 H}{1+\chi} = \rot H \cdot\rot \left(\frac{1}{1+\chi}\right)- \frac{\rot}{\tau}\left(\frac{\chi}{1+\chi}\right)
\end{align*}
is nonresonant in the sense of Definition \ref{def:Nonresonant-Bil-Form}. This intuition will be the object of investigation of a future work. 
 }

The main purpose of this paper is to make a step forward in the understanding of the cooperation between linear degeneracy and partial dissipation in the space-time resonance framework introduced in \cite{GMS2} using two toy models with a wave-type structure. 
Although here we are interested in hyperbolic systems of first order, the simple example below should explain why we believe that this is the right setting to investigate our problem. 

\paragraph{{A brief overview on the space-time resonance method.}}
Consider the scalar equation
\begin{equation*}
\partial_t u + i \Lambda u = \mathcal{Q}(u,u), \quad u(t=0)=u_0,
\end{equation*}
where $\Lambda=Op(|\xi|)$ is a real Fourier multiplier and $ \mathcal{Q}(u,u) \in \{ u^2, \bar{u}^2, u\bar{u}\}$. Now define the new unknown function $f(t):=\exp(i \Lambda t) u(t)$. By Duhamel formula, denoting by $\widehat{f}(t,\xi)$ the Fourier transform of $f(t)$, 
$$\widehat{f}(t,\xi)=\widehat{u}_0(\xi) + \int_0^t \int \exp(is \phi(\xi, \eta)) \widehat{f}(s, \xi-\eta) \widehat{f}(s, \eta) \, d\eta \, ds, \quad \phi(\xi, \eta)=|\xi| \pm |\xi-\eta| \pm |\eta|,$$
{where with a slight abuse of notation conjugate terms are omitted.}

From the stationary phase viewpoint, the set $\mathcal{T}=\{(\xi, \eta): \phi(\xi, \eta)=0\}$ is delicate and could create time growths. This is the the first point where damping helps, by {dissipating the amplitudes of the oscillations represented by the phase} $\phi(\xi, \eta)$. Indeed, in the very basic example below,
\begin{equation*}
\partial_t \tilde u + i \Lambda \tilde u + \tilde u= \mathcal{Q}(\tilde u,\tilde u), \quad \tilde u(t=0)=\tilde u_0,
\end{equation*}
the profile is $\tilde f(t)=\exp(i\Lambda t+t) \tilde u(t)$, so that the phase reads $\tilde \phi(\xi, \eta)=|\xi| \pm |\xi-\eta| \pm |\eta|+i$ and $\tilde{\mathcal{T}}=\emptyset $.
Roughly, this is what happens in the case of a (for instance a 2x2) Shizuta-Kawashima system in $u, v$ when $\widehat{u}, \widehat{v} \approx \exp(-t)$ interact with $\widehat{w} \approx \exp(i|\xi| t)$ solution to a transport equation at the level of high frequencies. By contrast, in the low frequency regime, there is no exponential decay in the Shizuta-Kawashima (dissipative) variables and the linear dynamics behave like the heat semigroup $\widehat{u}, \widehat{v} \approx \exp(-|\xi|^2t), |\xi| \exp(-|\xi|^2t)$ (see formula \eqref{dec:U}).  Since we deal with hyperbolic systems of the first order, whose initial data are transported in special directions chosen by the characteristics, for $(\xi, \eta) \in \mathcal{T}$ one can appeal to \emph{spatial localization} to get enough time decay. As widely explained for instance in \cite{GMS1, Pusateri}, in order to take advantage of spatial localization one usually relies on the analysis of the \emph{space resonances} given by the set $\mathcal{S}=\{(\xi, \eta)| \de_\eta \phi(\xi, \eta)=0\}$, integrating by parts in frequency $\eta$ away from $\mathcal{S}$. This is the crucial point where dissipation helps in the low frequency regime. In fact, since in the small frequency regime $\widehat{v}(t, \xi) \approx |\xi| \exp(-|\xi|^2t)$, a reduced version (our model is actually a 3x3 hyperbolic system) of the model that we study roughly behaves in the low-frequency regime as
\begin{align*}
\de_t w + i \Lambda w = v w, \quad 
\de_t v = \Delta v + v^2.
\end{align*}
 Thus we easily see that the time phase associated to the first equation above  is $\phi(\xi, \eta)=|\xi|-|\xi-\eta| + i |\eta|^2$, whose gradient $\de_\eta \phi$ is always far from zero. In other words, the space-resonant set is empty and therefore all the wave-packet interactions become negligible for large times. This just means that the oscillations given by $\exp(i|\xi|t)$ cannot generate any resonance through interactions with the heat semigroup $\exp(-|\xi|^2t)$, because of the fact that the latter simply does not oscillate!

\paragraph{{Presentation and motivations of our models.}}
The toy models that we consider in this paper are crude simplifications of more physically relevant systems, but they capture the most recurrent structure of models coupling the lack of dissipation due to the failure of [SK] with a suitable definition of linear degeneracy (Nonresonant Bilinear Form, see Definition \ref{def:Nonresonant-Bil-Form}).\\
We consider the simplest (and however most representative) form of a partially dissipative hyperbolic system satisfying [SK], coupled with a wave equation in three space dimensions. We describe more precisely the systems. The first one reads as follows:
\begin{equation*}
\begin{cases}
\partial_t u+i \Lambda v = \mathcal{Q}_1(u,v),  \\
\partial_t v+i \Lambda u + v = \mathcal{Q}_2(u,v)+\alpha uw,\\
\partial_t w+i \Lambda w=\mathcal{B}_{nonres}(w,w),
\end{cases}
\end{equation*}
where $\mathcal{Q}_i(u,v), i=1,2$ are generic quadratic terms in $u,v$, $\mathcal{B}_{nonres}(w,w)$ is a Nonresonant Bilinear Form and $\alpha$ is a constant.
Although the dissipation rate of the dissipative Shizuta-Kawashima variables $u,v$ is weakened by the presence of $w$ in the nonlinearity, in this model the amount of dissipation is still enough to prevent the formation of singularities. What happens here, as showed in Theorem \ref{thm:PSK}, is simply that the dissipation of the \emph{linear} equations for $u$ and $v$ is strong enough to handle a quadratic source involving a non-dissipated variable (i.e. the term $uw$). This is just to explicitly show that the dissipation due to the linear operator of the dissipative [SK]-variables $u,v$ is able to handle some specific quadratic sources involving (the non-dissipated variable) $w$, but not a general quadratic source as pointed out in Remark \ref{rmk:nonlinearity}. This is why a new approach is needed. 
In this regard, a more interesting toy model is the following:
\begin{equation*}
\begin{cases}
\partial_t u+i \Lambda v = \mathcal{Q}_1(u,v),  \\
\partial_t v+i \Lambda u + v = \mathcal{Q}_2(u,v),\\
\partial_t w+i \Lambda w=\mathcal{B}_{nonres}(w,w)+\alpha vw,
\end{cases}
\end{equation*}
where the coupling term $vw$ lies in the equation for $w$, whose linear part does not dissipate at all. The last equation with $\alpha=0$ is object of investigation of \cite{Pusateri}. The whole source term $\mathcal{B}_{nonres}(w,w)+\alpha vw$ is not nonresonant and does not fit the framework of \cite{Pusateri}, but encodes some dissipation thanks to $v$ in $vw$.
The natural question is to ask whether the null structure of $\mathcal{B}_{nonres}(w,w)$ and the dissipation hidden in $vw$ are able to force the solutions to stay smooth for all times. This has a positive answer, as Theorem \ref{thm:PSK2} shows, for which the analysis of both the space and time resonances is a key ingredient. The space-time resonance method is then adapted here to non-dispersive equations, i.e. partially dissipative hyperbolic systems of first order and allows to prove that the cooperation of very weak dissipation violating Shizuta-Kawashima and special nonlinear structures is successful for handling quadratic nonlinearities for long times, at least in the class of systems considered here. This shows the potential appeal of our approach for dealing with more concrete and complicated cases, as the examples discussed before.

Even if, at a first glance, our toy models can appear not so relevant from the physical point of view, instead they contain some of the main interesting features of the general case. The simplified scalar version of the 3D nonlinear wave equation was first studied in \cite{Pusateri}. It is considered here to avoid long and technical complications, but the same procedures can be extended to the real (vectorial) 3D wave equation with the null condition. Another reduction, which again simplifies the calculations, is due to the very basic form of the linear part of the partially dissipative hyperbolic system satisfying [SK] that we couple with the scalar wave equation. In terms of the Fourier variables, the linear part of our dissipative system can be merely reduced to the damped scalar wave equation. However, this is a very significant model for [SK]-systems, since the linearized part of these models can be always reduced to a canonical form (i.e., the first two linearized equations of system  \eqref{PK_system_w}) coupling a conservative part with a dissipative one, the so-called \emph{Conservative-Dissipative Form} introduced in \cite{BHN}, by means of a standard change of variables, see Remark \ref{rmk-CD-form} below. Therefore, our system is a simplified but representative model problem, and the same ideas apply to more realistic vectorial systems. The treatment of more (physically) meaningful models, as for instance the multiD compressible Euler equations out from rotational and vibrational equilibrium introduced before, is left for a future investigation.

Finally, we point out that, despite of their relevance, the state of the art on partially dissipative multidimensional hyperbolic systems, even in the [SK] case, is far from being complete and many problems are still open. For instance, in Section \ref{subsec:SK}, just using the decay properties of the Green function of the linearized system, we observe that the [SK]-system \eqref{K_system} can handle any type of quadratic source term even in the equation for the conservative variable $u$, a case which was not considered in \cite{HN, BHN}.

\paragraph{Plan of the paper}
The paper is organized as follows. In Section \ref{sec:first-model} we study a partially dissipative hyperbolic system (of Shizuta-Kawashima type) coupled with the wave equation with nonresonant bilinear source, where the coupling lies in the quadratic source term of the equations for the dissipative variables. Section \ref{sec:second-model} concerns the more interesting case where the coupling between the Shizuta-Kawashima system and the wave equation takes place in the quadratic source of the wave equation, whose linear part does not dissipate at all.

{\paragraph{Notation and conventions}
We highlight the following conventions, which will be used throughout the paper.
\begin{itemize}
\item We use the notation $f_1 \lesssim f_2$ if there exists a constant $C$ such that $f_1 \le C f_2$, where $C$ is independent of time. We denote $f_1\approx f_2$ if $f_1 \le f_2$ and $f_2 \lesssim f_1$.
\item $\mathcal{R}$ denotes the real part and $\mathcal{I}$ the imaginary part.
\item We do not keep track of the $2\pi, i, -1$ factors in front of the integrals, since they do not play any role in the estimates.
\item {We use $\mathcal{F}$ to denote the Fourier transform.}
\item We denote $\Lambda=(-\Delta)^\frac 12$ the operator associated with the Fourier multiplier $|\xi|$.
\item We use the notation $R_j$ for the $j$-th component of the Riesz transform $R$.
\item We occasionally identify operators with their symbols in order to avoid additional notations (this is for instance the case of the operator $\Lambda$). In particular, we denote, with a small abuse of notation,
$$T_{\mu(\xi, \eta)}(f,g)=\mathcal{F}^{-1}\int {\mu(\xi, \eta)}\widehat{f}(\eta-\xi) \widehat{g}(\eta) \, d\eta$$ the operator associated to the symbol $\mu(\xi, \eta)$.
\item We use the notation $\mu_0, \, \mu_s$ to denote any symbol in $\mathcal{B}_0, \, \mathcal{B}_s$ respectively (see the definitions below).
\end{itemize}}

\section{A first model violating the Shizuta-Kawashima condition}\label{sec:first-model}
We start with the simplest example of partially dissipative hyperbolic system violating the Shizuta-Kawashima condition, hereafter denoted by [SK]. In Fourier variables, the linear part of this system is as follows:
\begin{equation}\label{linear-PK-system}
\begin{cases}
\partial_t \widehat{u}+i|\xi| \widehat{v}=0,\\
\partial_t \widehat{v}+i|\xi| \widehat{u}+\widehat{v}=0,\\
\partial_t \widehat{w}+i|\xi| \widehat{w}=0,
\end{cases}
\end{equation}
where $(t, \xi) \in \mathbb{R}^+ \times \mathbb{R}^3$. As we will see in Section \ref{subsec: spectral-analysis}, the variables $u, v$ satisfy [SK], then will be identified as the \emph{dissipative variables}, while $w$ is decoupled and does not dissipate at all. 
We introduce the Bilinear Pseudoproduct Operator
\begin{equation}\label{def-pseudoproduct}
T_{m(\xi, \eta)}(f,g):=\mathcal{F}^{-1} \int m(\xi, \eta) \widehat{f}(\eta) \widehat{g}(\xi-\eta) \, d\eta,
\end{equation}
where 
$m(\xi,\eta)$ is a Nonresonant Bilinear Form, defined just below.

We consider system (\ref{linear-PK-system}) with a quadratic source term, coupling all the three variables:
\begin{equation}
\label{PK_system}
\begin{cases}
\partial_t \widehat{u}+i|\xi| \widehat{v}=a_u \widehat{u^2} + b_u \widehat{v^2} + c_u \widehat{uv},  \\
\partial_t \widehat{v}+i|\xi| \widehat{u}+\widehat{v}=a_v \widehat{u^2} + b_v \widehat{v^2} + c_v \widehat{uv} + d_v {\widehat{uw}},\\
\partial_t \widehat{w}+i|\xi| \widehat{w}=\widehat{T}_{m(\xi, \eta)}(w,w),
\end{cases}
\end{equation}
where $a_u, b_u, c_u, a_v, b_v, c_v, d_v$ are constant values. Following \cite{Germain, Pusateri}, define the \emph{profile} of $w$ \begin{align}\label{def:profile1}
f_w(t):=\exp(i\Lambda t) w(t).
\end{align}
Now consider the Duhamel formula for $\widehat{w}$ in (\ref{PK_system}) in terms of $\widehat{f}_w(t, \xi)$:
\begin{equation}\label{eq:Duhamel-w}
\partial_t \widehat{f}_w(t, \xi)= \int m(\xi, \eta) \exp{(i\phi_w(\xi, \eta)t)} \widehat{f}_w(\xi-\eta, t) \widehat{f}_w(\eta, t) \, d\eta,
\end{equation}
where the phase 
$$\phi_w(\xi, \eta)=|\xi|-|\xi-\eta|-|\eta|.$$

We need the following two definitions.
\begin{Def}(from \cite{CM} and [Definition 2.1, \cite{tao}])\label{def:CM}
A symbol $m(\xi, \eta)$ belongs to the Coifman-Meyer class if
\begin{align*}
|\partial^\alpha_\xi 	\partial_\eta^\beta m(\xi, \eta)| \le \frac{C}{(|\xi|+|\eta|)^{(|\alpha|+|\beta|)}}
\end{align*}
{for a finite number of multi-indeces $(\alpha, \beta)$ depending on the dimension $d$ (here $d=3$) of the physical space.}
\end{Def}

\begin{Def}[from \cite{Pusateri}, Definition 2.1]\label{def:symbols}
A symbol $m(\xi, \eta)$ belongs to the class $\mathcal{B}_s$ if:
\begin{itemize}
\item it is homogeneous of degree $s$; 
\item it is smooth outside $\{\xi=0\} \cup \{\xi-\eta=0\} \cup \{\eta=0\}$;
\item {for any labeling $(\xi_1, \xi_2, \xi_3)$ of the Fourier variables $(\xi, \xi-\eta, \eta)$, there exists a smooth function $\mathcal{A}\left(x,y,z\right)$ of the variables $x,y,z$ such that} $$\text{for  } |\xi_1| \ll |\xi_2|, |\xi_3| \approx 1, \quad m(\xi_1, \xi_2, \xi_3)=\mathcal{A}\left(|\xi_1|, \dfrac{\xi_1}{|\xi_1|}, \xi_2\right).$$
%namely $m$ is smooth as a function of the three variables $|\xi_1|, \dfrac{\xi_1}{|\xi_1|}, \xi_2$.
\end{itemize}
\end{Def}

{
\begin{Rmk}\label{rmk-holder}
Bilinear pseudoproduct operators $T_m$ associated to symbols $m(\xi, \eta)$ belonging to the Coifman-Meyer class and symbols in $\mathcal{B}_0$ are both bounded in $L^r$, i.e.
$$T_m: L^p \times L^q \rightarrow L^r, \quad \frac{1}{r}=\frac{1}{p}+\frac{1}{q}, \quad 1 < p,q < \infty, \; 0 < r < \infty.$$
{
More precisely, if $m \in \mathcal{B}_0$, then
\begin{align*}
\|T_m (f,g) \|_{L^r} \lesssim \|f\|_{L^p} \|g\|_{L^q}.
\end{align*}
Moreover, if $m \in \mathcal{B}_s$ for $s \ge 0$ and $k$ is an integer, then
\begin{align*}
\|\Lambda^k T_m (f,g) \|_{L^r} \lesssim \|f\|_{W^{s+k,p}} \|g\|_{L^q}+ \|f\|_{L^{p}} \|g\|_{W^{s+k, q}}.
\end{align*}
}
In the case of Coifman-Meyer operators, the same holds for $1<p, q \le \infty$.
More precise results will be discussed in the Appendix, see also \cite{CM, GMS1, Pusateri}.
\end{Rmk}
%%%%%
\begin{Rmk}[On the classes of symbols]\label{rmk:symbols}
As widely discussed and showed in \cite{Pusateri}, symbols in $\mathcal{B}_0$ in Definition \ref{def:symbols} are Coifman-Meyer (Definition \ref{def:CM}) outside the axes $\xi=0, \, \eta=0, \, \eta-\xi=0$. Throughout the paper, we sometimes work with smooth (Coifman-Meyer) operators which could not be homogeneous, while in other cases we deal with homogeneous symbols which have (weak) singularities as the $\mathcal{B}_0$ class.
We finally remark that, as pointed out in \cite{GMS1}, if $m(\xi, \eta)$ is a Coifman-Meyer multiplier, so is $m_\lambda(\xi, \eta):=m(\lambda \xi, \lambda \eta)$, so that the estimate of Remark \ref{rmk-holder} is independent of $\lambda$.
\end{Rmk}}

\begin{Def}[Resonant sets]\label{def:resonant-sets}
The time and space resonant sets are defined respectively as
$$\mathcal{T}=\{(\xi, \eta): \phi_w(\xi, \eta)=0\}, \quad \mathcal{S}=\{(\xi, \eta):\nabla_\eta \phi_w(\xi, \eta)=0\}.$$ The time-space resonant set is then
$$\mathcal{R}=\mathcal{T}\cap \mathcal{S}.$$
\end{Def}
{Intuitively, a bilinear form $m(\xi, \eta)$ is \emph{nonresonant} with respect to a linear system (whose oscillations are represented by the phase $\phi(\xi, \eta)$) if the time-space resonant set is empty and the time-space resonant set of the additional bilinear forms $\widetilde m$ that will be created by the integration by parts steps (of the space-time resonant method) is not ``too large''.}

\begin{Def}[Nonresonant Bilinear Form]\label{def:Nonresonant-Bil-Form}
Equation (\ref{eq:Duhamel-w}) is called \emph{nonresonant} if 
$$m(\xi, \eta)=a(\xi, \eta)\phi_w(\xi, \eta)+b(\xi, \eta) \nabla_\eta \phi_w(\xi, \eta),$$
where $a(\xi, \eta) \in \mathcal{B}_1$ and $b(\xi, \eta) \in \mathcal{B}_0$.
Two more purely technical requirements on $a(\xi, \eta), b(\xi, \eta)$ are presented in [\cite{Pusateri}, Definition 2.2].
\end{Def}

The main point of Definition \ref{def:Nonresonant-Bil-Form} is that the symbol $m(\xi, \eta)$, then the principal part of the pseudoproduct operator $T_m$ vanishes on the space-time resonant set $\mathcal{R}$. %As pointed out in \cite{Pusateri}, it is worth recalling that when $a(\xi, \eta)=0$, one recovers the Klainerman null condition \cite{Klainerman1}.\\

{\begin{Rmk}[Nonresonant Bilinear Forms contain Classical Null Forms]
We point out that setting $a(\xi, \eta)=0$ in the above formula, one recovers the classical null condition (see for instance \cite{Klainerman1} for the characterization classical null forms). We would like to provide explanations on this connection at least in some specific case. Let us consider the following semilinear equation for acoustic waves in the unknown $u(t, x): \mathbb{R}^+ \times \mathbb{R}^3 \rightarrow \mathbb{R}$
\begin{align*}
\de_{tt} u - \Delta u = Q_{\iota}(u, u),  \quad \text{with} \quad \iota \in \{0, 0i, ij\}, \quad i,j \in \{1,2,3\},
\end{align*}
where, for any $u, v$, 
\begin{align*}
Q_0(u, v)= \de_t u \de_t v - \nabla  u \cdot \nabla v, \quad Q_{0i}(u, v)= \de_t u \de_i v - \de_i  u \de_t v, \quad Q_{ij}(u, v)= \de_i u \de_j v - \de_j  u \de_i  v.
\end{align*} 
As underlined in \cite{Pusateri}, page 1501, any of the above quadratic forms is associated with a pseudoproduct operator $T_m$ whose symbol $m=m(\xi, \eta)$ is \rm{nonresonant} in the sense of Definition \ref{def:Nonresonant-Bil-Form}. We explain this claim below in the case of $Q_0(u, v)$. We consider then the following semilinear wave equation
\begin{align}\label{eq:eqjohn}
\de_{tt} u - \Delta u =Q_0(u,u)= (\de_t u)^2 - |\nabla u|^2. 
\end{align}
As already mentioned in the introduction, this is a typical example of semilinear wave equation with quadratic source satisfying the classical null condition and it is due to F. John \cite{John}. We want to show that the Fourier formulation of equation \eqref{eq:eqjohn} naturally rewrites in terms of the pseudoproduct $T_m$ where the bilinear symbol is $m(\xi, \eta) = 1 \pm \frac{(\xi-\eta)\cdot \eta}{|\xi-\eta| |\eta|}$. Afterwards it can be easily checked that such a symbol $m(\xi, \eta)$ fulfills the condition of Definition \ref{def:Nonresonant-Bil-Form} and $T_m$ is therefore a nonresonant bilinear form. \\
We introduce the following change of variable 
\begin{align*}
w= \de_t u - i \Lambda u, \quad \Lambda=|\nabla|=Op(|\xi|). 
\end{align*}
Note that the solution $u(t, x)$ to equation \eqref{eq:eqjohn} is real valued, so that we can write $\overline{w}=\de_t u + i \Lambda u$ (where $\overline{w}$ denotes the complex conjugate). In the new variables $w, \overline{w}$, equation \eqref{eq:eqjohn} reads
\begin{align*}
\de_t w + i \Lambda w = \left(\frac{w+\overline{w}}{2}\right)\cdot \left(\frac{w+\overline{w}}{2}\right) + \frac{\nabla}{|\nabla|} \left(\frac{w-\overline{w}}{2}\right) \cdot \frac{\nabla}{|\nabla|} \left(\frac{w-\overline{w}}{2}\right). 
\end{align*}
Passing to the Fourier formulation, it is now easy to see that one has
\begin{align*}
\de_t \widehat{w}+ i |\xi| \widehat{w}&= \int \left(1 - \frac{(\xi-\eta)\cdot \eta}{|\xi-\eta| |\eta|}\right) \left(\widehat{w}(\xi-\eta) {\widehat{w}}(\eta) + \overline{\widehat{w}}(\xi-\eta) \overline{\widehat{w}}(\eta)\right) \, d\eta\\
&\quad +  \left(1 + \frac{(\xi-\eta)\cdot \eta}{|\xi-\eta| |\eta|}\right) \left(\widehat{w}(\xi-\eta) {\overline{\widehat{w}}}(\eta) + \overline{\widehat{w}}(\xi-\eta) {\widehat{w}}(\eta)\right) \, d\eta. 
\end{align*}
Therefore we observe from this example that the bilinear pseudoproduct operator arises naturally in the context of first order hyperbolic systems with quadratic source satisfying the time and space nonresonant condition. In fact, it is simply a consequence of some change of variables and the Fourier formulation of the quadratic source: the Fourier analysis is crucial to take advantage from the oscillations away from $\mathcal{S}$ (or the non crossing away from $\mathcal{T}$) and, in order ``to integrate away from $\mathcal{S}$ (or $\mathcal{T}$)'', one needs to cut-off in Fourier space. Besides its complicated form, the pseudoproduct operator   is a natural tool arising from the Fourier representation and generalization of very concrete and classical null forms for the semilinear wave equation (see also \cite{bernicot}).
\end{Rmk}}

The aim of this section is to provide a first example of global in time well-posedness for a partially dissipative hyperbolic system (\ref{PK_system}), whose partial dissipation violates the [SK] coupling condition.

In the following, we prove the result stated below.
\begin{thm}{[Global existence and decay estimates for the first weaker [SK] model]}\label{thm:PSK}
Consider the Cauchy problem associated with system (\ref{PK_system}), where $m(\xi, \eta)$ is a Nonresonant Bilinear Form as in Definition \ref{def:Nonresonant-Bil-Form}. Let the initial data $U(1):=(u(1), v(1), w(1))$ belong to $H^N(\R^3)$. There exist $N>\frac{5}{2}$ large enough and $\varepsilon>0$ small enough such that, introducing 
{
\begin{equation}
\label{def:norm-initial-data}
E_N=\max \{\|U(1)\|_{L^1}, \, \|xU(1)\|_{H^2}+\|\Lambda x^2 U(1)\|_{H^1}+ \|U(1)\|_{H^N}\},
\end{equation}}
and assuming that
$$E_N \le \varepsilon,$$
system (\ref{PK_system}) admits a unique global in time solution 
$$U(t):=(u(t), v(t), w(t)) \in C([1, \infty); H^N(\mathbb{R}^3)) \cap C^1([1, \infty); H^{N-1}(\mathbb{R}^3)).$$
Moreover, defining 
\begin{equation}
\label{X_norm}
\begin{aligned}
M_0(t)&=\sup_{t \ge 1} \{ t^{\frac{3}{4}} \|u(t)\|_{H^N} + t^{\frac{3}{4}-\varepsilon} \|v(t)\|_{H^N} +  t^{\frac{3}{2}-2\varepsilon} |u(t)|_{L^\infty} + t^{\frac{3}{4}-\varepsilon} |v(t)|_{L^\infty}\\
&\quad +t^{-\varepsilon}\|w(t)\|_{H^N}+t |w(t)|_{L^\infty} + t |R w(t)|_{L^\infty} + t^{-\gamma} \|x f_w(t)\|_{L^2}\\ 
& \quad + \| \Lambda x f_w(t)\|_{H^1}+t^{-1} \| |x|^2 \Lambda f_w (t)\|_{H^1}\},
\end{aligned}
\end{equation}
then the following decay estimate holds true:
\begin{align*}
M_0(t) \le C E_N.
\end{align*}
\end{thm}

\begin{Rmk}[{[SK] condition and the Conservative-Dissipative Form}]\label{rmk-CD-form}
We recall that any partially dissipative hyperbolic system satisfying [SK] can be put in the so-called Conservative-Dissipative Form (C-D) (see \cite{BHN}) by means of a standard change of coordinates based on the projection onto the kernel of the linear source term and its complementing, see \cite{BHN}, \cite{Bianchini} for further details. In particular, the first two equations of system (\ref{linear-PK-system}) are in (C-D) Form. Its relevance will be clear in the spectral analysis of Section \ref{subsec: spectral-analysis}, where the variable $v$ is showed to decay faster, thus playing a key role for closing the bootstrap.
Therefore, although system \eqref{linear-PK-system} is simple, it captures the fundamental structure of a quite general partially dissipative system satisfying [SK]. The only actual simplification is the replacement of the symbol of the vectorial operator $\nabla=(\partial_{x_1}, \partial_{x_2}, \partial_{x_3})$ with $|\xi|=\sqrt{\xi_1^2+\xi_2^2+\xi_3^2}$, and the linear dissipative equations in \eqref{linear-PK-system} are a scalar reduction of the following more general dissipative system\begin{align*}
\begin{cases}
\partial_t u + \nabla \cdot v=0, \\
\partial_t v_1 + \partial_{x_1}u + v_1=0, \\
\partial_t v_2 + \partial_{x_2} u + v_2=0, \\
\partial_t v_3 + \partial_{x_3} u + v_3=0, \quad \text{where    }  v=(v_1, v_2, v_3).
\end{cases}
\end{align*}
This simplifies the low frequency analysis, but the qualitative properties of the system persist, so that in the end our linearized system \eqref{linear-PK-system} is quite representative. 
\end{Rmk}

\begin{Rmk}[A comment on the quadratic source of system (\ref{PK_system})]\label{rmk:nonlinearity}

The coupling between the dissipative variables $u, v$ and the transported variable $w$ in \eqref{PK_system} is due to the quadratic source term $uw$ in the equation for $v$.
The following proof also works for similar cases: for instance, one could couple the equation through the quadratic term $vw$ in the equation for slower variable $u$, so that the system reads
\begin{equation*}
\begin{cases}
\partial_t \widehat{u}+i|\xi| \widehat{v}=a_u \widehat{u^2} + b_u \widehat{v^2} + c_u \widehat{uv}  + d_u \widehat{vw},  \\
\partial_t \widehat{v}+i|\xi| \widehat{u}+\widehat{v}=a_v \widehat{u^2} + b_v \widehat{v^2} + c_v \widehat{uv},\\
\partial_t \widehat{w}+i|\xi| \widehat{w}= \widehat{T}_{m(\xi, \eta)}(w,w).
\end{cases}
\end{equation*}
Thus there is some flexibility on the choice of the coupling in the quadratic source, but there are also some limitations. Precisely, general quadratic nonlinearities involving the three variables $u, v, w$  cannot be treated with the simple bootstrap argument of this section, since the time decay is in general not enough to close the estimates and new ideas are needed. This is why a ``nonlinear Fourier analysis of the profile'' (the so-called \emph{space-time resonance method}, see \cite{GM, GMS2, Germain}) will be implemented in the next section on a more relevant partially dissipative hyperbolic system violating [SK].
 \end{Rmk}

The first step toward the proof of Theorem \ref{thm:PSK} is the analysis of the linear part of the system.

\subsection{Spectral analysis of the linearized system}\label{subsec: spectral-analysis}
Let us set $U=(u, v, w)$. The linear system (\ref{linear-PK-system}) in compact form reads
$$\partial_t \widehat{U}-E(i\xi)\widehat{U}=0,$$
where 
\begin{equation}
\label{Green_Partial_Kawashima}
\begin{aligned}
E(i\xi)=\left(\begin{array}{ccc}
0 & -i|\xi| & 0 \\
-i|\xi| & -1 & 0 \\
0 & 0 & -i|\xi|
\end{array}\right)=-i|\xi|A+B, \quad \text{with} \quad
A=\begin{pmatrix}
0 & 1 & 0 \\
1 & 0 & 0 \\
0 & 0 & 1
\end{pmatrix}, \;
B=\begin{pmatrix}
0 & 0 & 0 \\
0 & -1 & 0 \\
0 & 0 & 0 \\
\end{pmatrix}.
\end{aligned}
\end{equation}

We write here the eigenvalues associated to $E(i\xi)$, 

$$\lambda_1(\xi)=-\frac{1}{2}+\frac{\sqrt{1-4|\xi|^2}}{2}, \quad \lambda_2(\xi)=-\frac{1}{2}-\frac{\sqrt{1-4|\xi|^2}}{2}, \quad \lambda_3=-i|\xi|,$$

while the related eigenvectors are

$$
V_1(\xi)=\left(\begin{array}{c}
1\\
-i\Bigg(\frac{1-\sqrt{1-4|\xi|^2}}{2 |\xi|}\Bigg)\\
0
\end{array}\right),
\quad
V_2(\xi)=\left(\begin{array}{c}
1\\
-i\Bigg(\frac{1+\sqrt{1-4|\xi|^2}}{2 |\xi|}\Bigg)\\
0
\end{array}\right), \quad V_3=\left(\begin{array}{c}
0\\
0\\
1
\end{array}\right).$$

We analyse the low frequency regime. Then, fix $0<a\ll 1$ and consider $|\xi| \le a$, for $a$ small enough. The eigenvalues expansion yields the following expressions:

\begin{equation*}
\lambda_1^a(\xi)=-|\xi|^2+O(|\xi|^3), \quad \lambda_2^a(\xi)=-1+|\xi|^2+O(|\xi|^3).
\end{equation*}

Thus the first two equations in $u, v$ of system (\ref{linear-PK-system}) satisfy [SK]. On the other hand,
since the eigenvector $V_3$ belongs to the kernel of the linear source term $B$, then the linear part (\ref{linear-PK-system}) of system (\ref{PK_system}) is not [SK].
 
In the low frequency regime, the eigenprojectors associated to the eigenvalues near 0, i.e. $\lambda_1^a(\xi), \lambda_3(\xi)$, are given by:

\begin{equation}
\label{Total_P_zero}
P_1(i\xi)=\left(\begin{array}{ccc}
1+O(|\xi|^2) & -i|\xi|+O(|\xi|^2) & 0 \\
-i|\xi|+O(|\xi|^2)  & -|\xi|^2+O(|\xi|^3) & 0 \\
0 & 0 & 0
\end{array}\right), \quad
P_3=\left(\begin{array}{ccc}
0 & 0 & 0 \\
0 & 0 & 0 \\
0 & 0 & 1
\end{array}\right).
\end{equation}

The complementing projector reads:

\begin{equation}
\label{Total_P_negative}
P_2(i\xi)=\left(\begin{array}{ccc}
O(|\xi|^2) & i|\xi|+O(|\xi|^2) & 0 \\
i|\xi|+O(|\xi|^2)  & 1+|\xi|^2+O(|\xi|^3) & 0 \\
0 & 0 & 0
\end{array}\right).
\end{equation}

Now, let $\Gamma(t,x)$ be the Green function associated to system (\ref{linear-PK-system}), i.e. 
$$\Gamma(t,x)=\mathcal{F}^{-1}(\widehat{\Gamma}(t, \xi))=\mathcal{F}^{-1}(\exp(E(i\xi)t)).$$

From the previous expansions, we get an expression of the Green function in Fourier space:
\begin{align*}
\exp(E(i\xi)t)&=P_1(i\xi) \exp(-c|\xi|^2t + O(|\xi|^3)t) + P_2(i\xi) \exp(-ct+O(|\xi|^3))+P_3 \exp(-i|\xi|t),
\end{align*}
{for any universal constant $c>0$.}

\subsection{Decay estimates on the linearized system}\label{subsec:decay-linear}
In the low frequency regime, the spectral analysis of Section \ref{subsec: spectral-analysis} provides the following decomposition:
\begin{equation}
\label{PK_Green_function_decomposition}
\begin{aligned}
\widehat{\Gamma}(t)&=\widehat{K}(t)+\widehat{\mathcal{K}}(t)+\widehat{W}(t)\\&
\approx \exp(-c|\xi|^2 t)  \left(\begin{array}{ccc}
1 & |\xi| & 0 \\
|\xi| & |\xi|^2 & 0\\
0 & 0 & 0
\end{array}\right) 
+ \exp(-ct)  \left(\begin{array}{ccc}
|\xi|^2 & |\xi| & 0\\
|\xi| & |\xi|^2 & 0\\
0 & 0 & 0
\end{array}\right)
+ \exp(-i|\xi|t) \left(\begin{array}{ccc}
0 & 0 & 0\\
0 & 0 & 0\\
0 & 0 & 1
\end{array}\right).
\end{aligned}
\end{equation}

Therefore the solution 
\begin{align}\label{def:solU}
U(t)=(u(t), v(t), w(t))=\Gamma(t)U_0 
\end{align}
to the linear system (\ref{linear-PK-system}) with initial data $U(t=1)=U_0$ can be decomposed as follows:
\begin{equation}
\label{dec:U}
U(t)=\Gamma(t)U_0=K(t)U_0+\mathcal{K}(t)U_0+W(t)U_0.
\end{equation}

Define the left eigenprojectors onto the null space of the linear source term $L_1=(1, 0, 0), \; L_3=(0, 0, 1),$ and its complementing $L_{2}=(0, 1, 0).$
We will use the notation $\Lambda:=Op(|\xi|)$ (but also for $|\xi-\eta|$ and $|\eta|$), while $R$ denotes the Riesz transform. Occasionally we identity symbols with related operators.\\
Let us recall the decay estimates from \cite{BHN}.

\begin{Prop}\label{prop:decay-SK}
Let $U(t)$ be defined in (\ref{dec:U}), with $U_0 \in L^1(\R^3) \cap H^s(\R^3).$ Then, for any multi-index $\beta$ with $|\beta| \le s$, there exists $C>0$ such that the following the following decay estimates hold for $t \ge 1$:
\begin{equation}
\label{PK_Decay_estimates}
\begin{aligned}
& i)\;  \|L_1 D^\beta K(t) U_0\|_{L^2} \le C \min \{1, t^{-\frac{3}{4}-\frac{|\beta|}{2}}\} \|L_1 U_0\|_{L^1} + C \min \{1, t^{-\frac{5}{4}-\frac{|\beta|}{2}} \} \|L_2 U_0\|_{L^1}, \\
& ii)\; \|L_2 D^\beta K(t) U_0\|_{L^2} \le C \min \{1, t^{-\frac{5}{4}-\frac{|\beta|}{2}}\} \|L_1 U_0\|_{L^1} + C \min \{1, t^{-\frac{7}{4}-\frac{|\beta|}{2}} \} \|L_2 U_0\|_{L^1}, \\
& iii) \; |L_1 D^\beta K(t) U_0|_{L^\infty} \le C \min \{1, t^{-\frac{3}{2}-\frac{|\beta|}{2}}\} \|L_1 U_0\|_{L^1} + C \min \{1, t^{-2-\frac{|\beta|}{2}} \} \|L_2 U_0\|_{L^1}, \\
& iv) \;|L_2 D^\beta K(t) U_0|_{L^\infty} \le C \min \{1, t^{-2-\frac{|\beta|}{2}}\} \|L_1 U_0\|_{L^1} + C \min \{1, t^{-\frac{5}{2}-\frac{|\beta|}{2}} \} \|L_2 U_0\|_{L^1}, \\
& v)\; \|D^\beta \mathcal{K}(t) U_0\|_{L^2} \le C \exp(-ct) \|D^\beta U_0\|_{L^2}.\\
\end{aligned}
\end{equation}
\end{Prop}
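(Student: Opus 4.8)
The plan is to read off all four families of bounds from the explicit low-frequency form of $\widehat K$ in \eqref{PK_Green_function_decomposition}, using Plancherel for the $L^2$ estimates and the elementary inequality $\|f\|_{L^\infty}\le \|\widehat f\|_{L^1}$ for the $L^\infty$ estimates. The observation that organizes everything is that, after sandwiching $K$ between the left/right projectors, each scalar entry of $L_i\,\widehat K(t)\,L_{i'}$ has the schematic form $|\xi|^{d(i,i')}\exp(-c|\xi|^2 t)$ on the low-frequency region $\{|\xi|\le a\}$, where the order of vanishing at the origin is $d(1,1)=0$, $d(1,2)=d(2,1)=1$ and $d(2,2)=2$; this is exactly the structure displayed in the diffusive block of \eqref{PK_Green_function_decomposition}, and it encodes the faster decay of the $v$-component announced in Remark \ref{rmk-CD-form}. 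Applying $D^\beta$ multiplies the symbol by $(i\xi)^\beta$, whose modulus is bounded by $|\xi|^{|\beta|}$, so every case reduces to estimating a single weight $|\xi|^{m}\exp(-c|\xi|^2 t)$ with $m=d(i,i')+|\beta|$.

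First I would record the two scaling computations on $\R^3$ that produce the rates. Substituting $\xi\mapsto \xi/\sqrt t$ gives
\begin{equation*}
\big\| \,|\xi|^{m}\exp(-c|\xi|^2 t)\,\big\|_{L^2(\R^3)} \le C\, t^{-\frac34-\frac m2}, \qquad \big\| \,|\xi|^{m}\exp(-c|\xi|^2 t)\,\big\|_{L^1(\R^3)} \le C\, t^{-\frac32-\frac m2}.
\end{equation*}
For the $L^2$ bounds i)--ii) I would then combine Plancherel with $\|\widehat{L_{i'}U_0}\|_{L^\infty}\le \|L_{i'}U_0\|_{L^1}$, pulling the $L^\infty$ norm of the data transform out of the integral and applying the first estimate with $m=d(i,i')+|\beta|$; for the $L^\infty$ bounds iii)--iv) I would instead use $\|\,L_i D^\beta K L_{i'}U_0\,\|_{L^\infty}\le \|\,|\xi|^{m}\exp(-c|\xi|^2 t)\,\|_{L^1}\,\|L_{i'}U_0\|_{L^1}$ together with the second estimate. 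Reading off $m$ from the table $d(i,i')$ reproduces each of the eight rates: e.g. $L_1KL_1$ gives $m=|\beta|$ and rate $3/4+|\beta|/2$ in $L^2$, while $L_2KL_2$ gives $m=2+|\beta|$ and rate $5/2+|\beta|/2$ in $L^\infty$.

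Two points require a little care before the computation is clean. The exponent in \eqref{PK_Green_function_decomposition} is really $-c|\xi|^2 t+O(|\xi|^3)t$, and on $\{|\xi|\le a\}$ the correction obeys $O(|\xi|^3)t\le C a\,|\xi|^2 t$, so choosing $a$ small absorbs it into $\exp(-c'|\xi|^2 t)$ with a slightly smaller constant; likewise the $O(|\xi|^2)$ and $O(|\xi|^3)$ corrections in the projectors $P_1,P_2$ only raise the order of vanishing and hence improve the rates. The factor $\min\{1,\,\cdot\,\}$ comes from splitting into low and high frequencies: on $\{|\xi|\le a\}$ the integrals above are uniformly $O(1)$ for bounded $t$ and decay at the stated polynomial rate for large $t$, while on $\{|\xi|>a\}$ one has $\operatorname{Re}\lambda_{1}=\operatorname{Re}\lambda_2=-1/2$, so the corresponding piece of $K$ (and all of $\mathcal K$) decays like $\exp(-ct)$ and is absorbed. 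This last point also yields v) directly: the symbol of $\mathcal K$ carries a uniform factor $\exp(-ct)$ and is otherwise bounded, so Plancherel gives $\|D^\beta \mathcal K(t)U_0\|_{L^2}\le C\exp(-ct)\|D^\beta U_0\|_{L^2}$ with no loss of derivatives.

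I would expect the only genuine obstacle to be bookkeeping rather than analysis: keeping track of the correct order of vanishing $d(i,i')$ of each matrix entry so that the gain of $|\xi|$ for every $L_2$-projector is not lost, since it is precisely this gain that distinguishes the $v$-rates from the $u$-rates and, as noted in Remark \ref{rmk-CD-form}, is what eventually allows the nonlinear estimates to close. Matching the low- and high-frequency regimes into the single $\min\{1,\,\cdot\,\}$ form is routine once the exponential decay of the high-frequency block is in hand.
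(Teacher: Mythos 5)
Your proof is correct, but it is worth noting that the paper itself contains no proof of this proposition: the estimates are simply recalled from \cite{BHN}, where they are established for general partially dissipative systems satisfying [SK] written in conservative--dissipative form. Your argument is the natural self-contained specialization of that Green-function approach to the explicit system at hand: the entrywise structure $|\xi|^{d(i,i')}\exp(-c|\xi|^2t)$ of the diffusive block, the scaling identities $\||\xi|^m e^{-c|\xi|^2t}\|_{L^2(\mathbb{R}^3)}\le Ct^{-\frac34-\frac m2}$ and $\||\xi|^m e^{-c|\xi|^2t}\|_{L^1(\mathbb{R}^3)}\le Ct^{-\frac32-\frac m2}$, Plancherel combined with $\|\widehat{f}\|_{L^\infty}\le\|f\|_{L^1}$, and the absorption of the $O(|\xi|^3)t$ phase correction for $|\xi|\le a$ are precisely the ingredients of the cited proof, here made explicit for the $2\times2$ dissipative block; what your version buys is that the reader does not need to translate the general [SK] machinery of \cite{BHN} to this model. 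Two small points should be tightened, though neither breaks the argument. First, your claim that $\operatorname{Re}\lambda_1=\operatorname{Re}\lambda_2=-\frac12$ on $\{|\xi|>a\}$ is only true for $|\xi|\ge\frac12$; on the annulus $a<|\xi|<\frac12$ both eigenvalues are real and negative with $\lambda_1\le-\frac12+\frac{\sqrt{1-4a^2}}{2}<0$, so what you actually have (and all you need) is a uniform bound $\operatorname{Re}\lambda_{1,2}\le-c(a)<0$. Second, at $|\xi|=\frac12$ the two eigenvalues collide and the eigenprojectors blow up like $(1-4|\xi|^2)^{-1/2}$, so the uniform bound $\|e^{E(i\xi)t}\|\le Ce^{-ct}$ on the high-frequency region cannot be read off from the spectral decomposition alone; it follows instead from a Jordan-block estimate $C(1+t)e^{-t/2}$ near the degeneracy, or more robustly from the energy identity $\frac{d}{dt}(|\hat u|^2+|\hat v|^2)=-2|\hat v|^2$ supplemented by the compensator $\operatorname{Im}(\hat u\overline{\hat v})/|\xi|$, which yields uniform exponential decay for $|\xi|\ge a$. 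With these two repairs, your low-frequency computation, which is where all the stated rates originate, gives exactly i)--v).
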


Some a priori bounds on $w$ are listed in \cite{Pusateri}.
\begin{Prop}\label{prop:PS} 
Let $w(t)$ be the third component of the solution $U(t)$ to system \eqref{linear-PK-system}, as in \eqref{def:solU}, with initial data $w_0=w(t=1)$ such that there exists $C>0$ such that $\|x w_0\|_{H^2} + \|\Lambda x^2 w_0\|_{H^1} + \|w_0\|_{H^N} \le C$. Let $f_w(t)$ {be} the \emph{profile} of $w(t)$, which is defined as follows:
\begin{equation}
\label{w_profile}
f_w(t):=e^{i\Lambda t} w(t).
\end{equation}
Then, for a suitable large $N$ and arbitrarily small fixed constants $\varepsilon, \gamma$, the following decay rates hold for $t \ge 1$:
\begin{equation}
\label{PK_Decay_estimates_1}
\begin{aligned}
& i) \;\|L_3 W(t)U_0\|_{H^N}= \|w\|_{H^N} \lesssim t^\varepsilon, \\
& ii) \; |L_3 W(t)U_0|_{L^\infty}= |w|_{L^\infty} \lesssim t^{-1}, \quad |L_3 R W(t)U_0|_{L^\infty}= |R w|_{L^\infty} \lesssim t^{-1},\\
& iii) \; \|xf_w\|_{L^2} \lesssim t^\gamma, \\
& iv) \; \|\Lambda x f_w\|_{H^1} \lesssim 1, \\
& v) \; \||x|^2 \Lambda f_w\|_{H^1} \lesssim t.
\end{aligned}
\end{equation}
\end{Prop}

\subsection{Decay estimates on the nonlinear system}\label{subsect:nonlinear1}

The equation for $w$ in (\ref{PK_system}) is independent of the dissipative variables $u, v$ and it has already been studied in \cite{Pusateri}. Therefore, we will simply implement the estimates obtained in \cite{Pusateri} in the dissipative part of system \eqref{PK_system}. More precisely, we use the following result from \cite{Pusateri}.

\begin{Prop}\label{prop:PS-nonlinear}
Let $w$ be the solution to 
$$\partial_t \widehat{w}+i|\xi| \widehat{w}=\widehat{T}_{m(\xi, \eta)} (w,w), \quad \widehat{w}(1, \xi)=\widehat{w}_0(\xi),$$
where $m(\xi, \eta)$ is a Nonresonant Bilinear Form as in Definition \ref{def:Nonresonant-Bil-Form}.
Consider the equation for the profile $\widehat{f}_w(t, \xi)$ in (\ref{w_profile}), i.e.
$$\widehat{f}_w(t, \xi)=\widehat{w}_0(\xi)+\int_1^t \widehat{T}_{m(\xi, \eta)}(w,w) \, ds.$$
There exists $\varepsilon_0 >0$ small enough such that, for 
$$\|x w_0\|_{H^2} + \|\Lambda x^2 w_0\|_{H^1} + \|w_0\|_{H^N} \le \varepsilon_0,$$
then the following decay estimates hold for $t \ge 1$:
\begin{equation*}
\begin{aligned}
&\|w(t)\|_{H^N} \lesssim t^\varepsilon{(\eps_0 + M_0^2(t))}, \quad 
|w(t)|_{L^\infty} \lesssim t^{-1}{(\eps_0 + M_0^2(t))}, \quad |R w(t)|_{L^\infty} \lesssim t^{-1}{(\eps_0 + M_0^2(t))},\\
&\|xf_w(t)\|_{L^2} \lesssim t^\gamma {(\eps_0 + M_0^2(t))}, \quad
\|\Lambda x f_w(t)\|_{H^1} \lesssim {(\eps_0 + M_0^2(t))}, \quad
\||x|^2 \Lambda f_w(t)\|_{H^1} \lesssim t {(\eps_0 + M_0^2(t))}, \\
&\|\partial_t f_w\|_{L^2} \lesssim t^{-2+\delta} {(\eps_0 + M_0^2(t))}, \quad \|x\partial_t f_w\|_{L^2} \lesssim t^{-1+\delta} {(\eps_0 + M_0^2(t))},
\end{aligned}
\end{equation*}
for a suitable large $N$ and arbitrarily small fixed constants $\varepsilon, \delta, \gamma$.
\end{Prop}

\begin{proof}[Proof of Theorem \ref{thm:PSK}]
\textbf{Decay estimates in $H^N$.}
We apply the Duhamel formula to system (\ref{PK_system}), recalling that for the slower variable $u$ we expect a decay $\|u(t)\|_{H^N} \approx t^{-\frac{3}{4}}$ as in the linear case in (\ref{PK_Decay_estimates}), while the expected rate for $v$ is quite worse $\|v(t)\|_{H^N} \approx t^{-\frac{3}{4}+\varepsilon}$ with respect to the linear case, because of the mixed term $d_v uw$. Then,
\begin{align*}
\|u(t)\|_{H^N} & \le CE_N \min \{1, t^{-\frac{3}{4}}\} + C M_0^2(t) \int_1^t \exp(-c(t-s)) \min \{1, s^{-\frac{3}{4}}\} \, ds \\
&\quad  + \int_1^t \min\{1, (t-s)^{-\frac{3}{4}}\} \|u(s)\|^2_{H^N} \, ds + \int_1^t \min\{1, (t-s)^{-\frac{3}{4}}\} \, \|u(s)\|_{H^N} \|v(s)\|_{H^N} \, ds\\
&\quad + \int_1^t \min\{1, (t-s)^{-\frac{3}{4}}\} \, \|v(s)\|^2_{H^N}\, ds \\
& \le C \min \{1, t^{-\frac{3}{4}}\} + C M_0^2(t) \int_0^t \exp(-c(t-s)) \min \{1, s^{-\frac{3}{4}}\} \, ds \\
& \quad + C M_0^2(t) \int_1^t \min\{1, (t-s)^{-\frac{3}{4}}\} \min \{1, s^{-\frac{3}{2}}\} \, ds + CM_0^2(t) \int_1^t \min\{1, (t-s)^{-\frac{3}{4}}\} \, \min \{1, s^{-\frac{3}{2}+\varepsilon} \} \, ds\\
&\quad + CM_0^2(t) \int_1^t \min\{1, (t-s)^{-\frac{3}{4}}\} \, \min \{1, s^{-\frac{3}{2}+2\varepsilon} \} \, ds. 
\end{align*}

Thanks to [\cite{BHN}, Lemma 5.2], one gets:
%\begin{itemize}
%\item $\displaystyle \int_1^t \exp(-c(t-s)) \min \{1, s^{-\frac{3}{4}}\} \, ds \le t^{-\frac{3}{4}}$,
%\item $\displaystyle \int_1^t \min\{1, (t-s)^{-\frac{3}{4}}\} \min \{1, s^{-\frac{3}{2}}\} \, ds \le t^{-\frac{3}{4}},$
%\item $ \displaystyle \int_1^t \min\{1, (t-s)^{-\frac{3}{4}}\} \, \min \{1, s^{-\frac{3}{2}+\varepsilon}\} \, ds \le t^{-\frac{3}{4}},$
%\item $\displaystyle \int_1^t \min\{1, (t-s)^{-\frac{3}{4}}\} \, \min \{1, s^{-\frac{3}{2}+2\varepsilon} \} \, ds \le t^{-\frac{3}{4}}$.
%\end{itemize}
%This implies that
\begin{equation}
\label{PK_estimate_u}
\|u(t)\|_{H^N} \le CE_N \min \{1, t^{-\frac{3}{4}}\} +  Ct^{-\frac{3}{4}} M_0^2(t).
\end{equation}
{
The estimate for the dissipative variable is analogous. For some universal constant $C>0$, we have that
\begin{align*}
\|v(t)\|_{H^N} & \le C \min \{1, t^{-\frac{5}{4}}\}E_N + C M_0(t) E_{N} \int_1^t \exp(-c(t-s)) \min \{1, s^{-\frac{3}{4}+\varepsilon}\} \, ds \\
&\quad + C M_0(t)^2 \int_1^t \min\{1, (t-s)^{-\frac{5}{4}}\} \min \{1, s^{-\frac{3}{2}}\} \, ds + CM_0(t)^2 \int_1^t \min\{1, (t-s)^{-\frac{5}{4}}\} \, \min \{1, s^{-\frac{3}{2}+\varepsilon} \} \, ds\\
&\quad + CM_0(t)^2 \int_1^t \min\{1, (t-s)^{-\frac{5}{4}}\} \, \min \{1, s^{-\frac{3}{2}+2\varepsilon}  \} \, ds + C\int_1^t \min\{1, (t-s)^{-\frac{5}{4}}\} \|u w(s)\|_{L^1} \, ds.
\end{align*}
%%%%%
The first four integrals are $O( t^{-\frac{5}{4}})$. Let us consider the last one. We have
\begin{align*}
 \int_1^t  \min\{1, (t-s)^{-\frac{5}{4}}\} \|uw(s)\|_{L^1} \, ds & \le C \int_1^t \min\{1, (t-s)^{-\frac{5}{4}}\} \|u(s)\|_{L^2} \|w(s)\|_{L^2} \, ds \\
& \le C \int_1^t \min\{1, (t-s)^{-\frac{5}{4}}\} \|u(s)\|_{H^N} \|w(s)\|_{H^N} \, ds\\
&  \le C M_0(t)^2 \int_1^t \min\{1, (t-s)^{-\frac{5}{4}}\} \min\{1, s^{-\frac{3}{4}+\varepsilon}\} \, ds\\
& \le C t^{-\frac{3}{4}+\varepsilon}M_0(t)^2.
\end{align*}
%%%%%
Note in particular that this last term is responsible for the $\eps$ loss in the time-decay rate of the dissipative variable $v(t)$. In fact, we know from Proposition \ref{prop:PS-nonlinear} (and from \cite{Pusateri}) that in $H^N$ the wave variable $w(t)$ has a time growth of order $t^\varepsilon$. 
Putting all the terms together, we have
\begin{equation}
\label{PK_estimate_v}
\|v(t)\|_{H^N} \le C \min \{1, t^{-\frac{5}{4}}\}E_N + t^{-\frac{3}{4}+\varepsilon} M_0^2(t).
\end{equation}}

\textbf{Decay estimates in $L^\infty$.}
We start from the slower variable $u$:
\begin{align*}
|u(t)|_{L^\infty} & \le C \min \{1, t^{-\frac{3}{2}}\} + C M_0^2(t) \int_1^t \exp(-c(t-s)) \min \{1, s^{-\frac{3}{2}+2 \varepsilon}\} \, ds \\
&\quad + \int_1^t \min\{1, (t-s)^{-\frac{3}{2}}\} \|u(s)\|^2_{H^N} \, ds\ + \int_1^t \min\{1, (t-s)^{-\frac{3}{2}}\} \, \|u(s)\|_{H^N} \|v(s)\|_{H^N} \, ds\\
&\quad + \int_1^t \min\{1, (t-s)^{-\frac{3}{2}}\} \, \|v(s)\|^2_{H^N}\, ds \\
& \le C \min \{1, t^{-\frac{3}{2}}\} + C M_0^2(t) \int_1^t \exp(-c(t-s)) \min \{1, s^{-\frac{3}{2}+2 \varepsilon}\} \, ds \\
&\quad + C M_0^2(t) \int_1^t \min\{1, (t-s)^{-\frac{3}{2}}\} \min \{1, s^{-\frac{3}{2}}\} \, ds+ CM_0^2(t) \int_1^t \min\{1, (t-s)^{-\frac{3}{2}}\} \, \min \{1, s^{-\frac{3}{2}+\varepsilon} \} \, ds\\
&\quad + CM_0^2(t) \int_1^t \min\{1, (t-s)^{-\frac{3}{2}}\} \, \min \{1, s^{-\frac{3}{2}+2\varepsilon} \} \, ds \\
& \le C \min \{1, t^{-\frac{3}{2}}\} + Ct^{-\frac{3}{2}+2 \varepsilon} M_0^2(t).
\end{align*}

This implies that
\begin{equation}
\label{u_L_infty_estimate}
|u(t)|_{L^\infty} \le CE_N \min \{1, t^{-\frac{3}{2}}\} + C t^{-\frac{3}{2}+2 \varepsilon}M_0^2(t).
\end{equation}

Similarly, the dissipative variable decays as follows:
%\begin{align*}
%|v(t)|_{L^\infty} & \le C \min \{1, t^{-2}\}E_N + C M_0^2(t) \int_1^t \exp(-c(t-s)) \min \{1, s^{-\frac{3}{4}+\varepsilon}\} \, ds \\
%& + C M_0^2(t) \int_1^t \min\{1, (t-s)^{-2}\} \min \{1, s^{-\frac{3}{2}}\} \, ds\\
%& + CM_0^2(t) \int_1^t \min\{1, (t-s)^{-2}\} \, \min \{1, s^{-\frac{3}{2}+\varepsilon} \} \, ds\\
%& + CM_0^2(t) \int_1^t \min\{1, (t-s)^{-2}\} \, \min \{1, s^{-\frac{3}{2}+2\varepsilon}  \} \, ds \\
%& + C\int_1^t \min\{1, (t-s)^{-2}\} \|u w(s)\|_{L^1}(s) ds.
%\end{align*}

%The last term reads:
%\begin{align*}
%\int_1^t & \min\{1, (t-s)^{-2}\} \|u w(s)\|_{L^1}(s)\,  ds \\
%& \le \int_1^t \min\{1, (t-s)^{-2}\} \|u(s)\|_{H^N} \|w(s)\|_{H^N} \, ds\\
%& \le CM_0^2(t) \int_1^t \min\{1, (t-s)^{-2}\} \min \{1, \{s^{-\frac{3}{4}+\varepsilon}\}\} \, ds\\
%& \le C t^{-\frac{3}{4}+\varepsilon} M_0^2(t).
%\end{align*}

%Thus,
\begin{equation}
\label{v_L_infty_estimate}
|v(t)|_{L^\infty} \le CE_N \min \{1, t^{-2}\} + C t^{-\frac{3}{4}+\varepsilon}M_0^2(t).
\end{equation}
\end{proof}

\subsection{Shizuta-Kawashima with any quadratic source}\label{subsec:SK}
The result of this section can be applied to the following reduced version of system (\ref{PK_system}):
\begin{equation}
\label{K_system}
\begin{cases}
\partial_t \widehat{u}+i|\xi| \widehat{v}=a_u \widehat{u^2} + b_u \widehat{v^2} + c_u \widehat{uv}, \\
\partial_t \widehat{v}+i|\xi| \widehat{u}+\widehat{v}=a_v \widehat{u^2} + b_v \widehat{v^2} + c_v \widehat{uv},
\end{cases}
\end{equation}

where $(t, \xi)=(t, \xi) \in \mathbb{R}^+ \times \mathbb{R}^3$ and 
$a_u, b_u, c_u, a_v, b_v, c_v$ are constant values. System (\ref{K_system}) is partially dissipative and satisfies [SK] in the unknown variable $U=(u, v)$, as it can checked from the linear analysis, following exactly Section \ref{subsec: spectral-analysis}. This linearized [SK] model is already in the Conservative-Dissipative Form (see Remark \ref{rmk-CD-form}). In this section, we extend the result of \cite{BHN} in the three dimensional semilinear case, as an application of the method developed for (\ref{PK_system}). Notice indeed that in the general theory developed in \cite{BHN}, the equation for the slow variable $u$ is conservative. Here we are instead able to handle a general quadratic source in the equation for $u$.
Starting from the linearized system in Fourier variables, we have
$$\partial_t \widehat{U} - E(i\xi) \widehat{U} =0, \qquad
E(i\xi)=\left(\begin{array}{cc}
0 & -i|\xi| \\
-i|\xi| & -1
\end{array}\right).
$$

The method of Section \ref{subsec: spectral-analysis} applied to this system provides the following eigenvalues expansions in the low frequency regime $|\xi| \ll a$ for $a$ small enough:
\begin{equation}
\label{eig_expansion}
\lambda^a(\xi)=-|\xi|^2+O(|\xi|^3), \quad  \lambda_{-}^a(\xi)=-1+|\xi|^2+O(|\xi|^3), 
\end{equation}
with related eigenprojectors:
\begin{equation*}
P(i\xi)=\left(\begin{array}{cc}
1+O(|\xi|^2) & -i|\xi|+O(|\xi|^2)\\
-i|\xi|+O(|\xi|^2) & -|\xi|^2 + O(|\xi|^3)
\end{array}\right), \quad 
P_{-}(i\xi)=\left(\begin{array}{cc}
O(|\xi|^2) & i|\xi| + O(|\xi|^2)\\
 i|\xi| + O(|\xi|^2) & 1+|\xi|^2+O(|\xi|^3)
\end{array}\right).
\end{equation*}
The Green function in Fourier space is given by:
\begin{align*}
\widehat{\Gamma}(t, \xi)& =\exp(E(i\xi)t)\\
&=P(i\xi) \exp((-|\xi|^2+O(|\xi|^3)) t) + P_{-}(i\xi) \exp((-1+O(|\xi|^2)t)\\
&\le C \exp(-c|\xi|^2 t)  \left(\begin{array}{cc}
1 & |\xi| \\
|\xi| & |\xi|^2 
\end{array}\right) 
+ C \exp(-ct)  \left(\begin{array}{cc}
|\xi|^2 & |\xi| \\
|\xi| & |\xi|^2 
\end{array}\right) \\
&=: \widehat{K}(t)+\widehat{\mathcal{K}}(t).
\end{align*}
Then the solution 
$U(t)=(u(t), v(t))=\Gamma(t)U_0$ to the linear system (\ref{K_system}) with initial data $U(t=1)=U_0$ can be decomposed as follows:
$$U(t)=\Gamma(t)U_0=K(t)U_0+\mathcal{K}(t)U_0,$$
and the decay estimates  i) , ii), iii), iv), v) of Section \ref{subsec:decay-linear} hold.
Now define the norm of the initial data for system (\ref{K_system}):
\begin{equation*}
E_N=\max\{ \|u(0)\|_{L^1}+\|v(0)\|_{L^1}, \|u(0)\|_{H^N}+\|v(0)\|_{H^N} \},
\end{equation*}
and introduce the functional:
\begin{equation*}
M_0(t)=\sup_{0 \le \tau \le t} \{ \max\{1, \tau^{\frac{3}{4}}\} \|u(\tau)\|_{H^N} + \max\{1, \tau^{\frac{5}{4}}\} \|v(\tau)\|_{H^N} \}.
\end{equation*}
Applying the procedure of Section \ref{subsect:nonlinear1}, we can prove the following result.
\begin{thm}{[Global existence and decay estimates for the [SK] system]}\label{thm:SK}
Consider the Cauchy problem associated with system (\ref{K_system}) and initial data belonging to $H^N(\mathbb{R}^3),$ with $N>\frac{5}{2}$ and $E_N \le \varepsilon$ for some fixed $\varepsilon$ small enough. Then there exists a unique global in time solution 
$$U=(u, v) \in C(([0, \infty); H^N(\mathbb{R}^3)) \cap C^1([0, \infty); H^{N-1}(\mathbb{R}^3)),$$
with time decay estimates:
$$\|u(t)\|_s \lesssim t^{-\frac{3}{4}} E_N, \qquad \|v(t)\|_s \lesssim t^{-\frac{5}{4}} E_N.$$
\end{thm}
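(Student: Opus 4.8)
The plan is to mirror the argument used for Theorem~\ref{thm:PSK}, which is considerably simpler here since the non-dissipative variable $w$ (and the mixed coupling $d_v uw$) is absent: the system \eqref{K_system} is self-contained in $U=(u,v)$ and every nonlinear term is a genuine quadratic form in the dissipative variables. First I would record the standard local well-posedness: since the nonlinearity is smooth and $H^N(\mathbb{R}^3)$ is a Banach algebra for $N>5/2$, a Picard iteration (or energy method) yields a unique local solution $U\in C([0,T);H^N)\cap C^1([0,T);H^{N-1})$ together with the continuation criterion that the solution extends as long as $\|U(t)\|_{H^N}$ stays bounded. Global existence then reduces to the a priori estimate $M_0(t)\le C E_N$, which simultaneously encodes the claimed decay rates $\|u(t)\|_{H^N}\lesssim t^{-3/4}E_N$ and $\|v(t)\|_{H^N}\lesssim t^{-5/4}E_N$. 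On the compact interval $[0,1]$ the weights $\max\{1,\tau^{3/4}\}$ and $\max\{1,\tau^{5/4}\}$ equal $1$ and $M_0$ is controlled by the energy bound, so I would reduce to estimating $\tau\ge 1$ and run the argument with initial time $t=1$, exactly as in Section~\ref{subsect:nonlinear1}.

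The core of the proof is a bootstrap. Assuming $M_0(t)\le 2CE_N$ on a maximal interval, I would apply Duhamel's formula $U(t)=\Gamma(t)U_0+\int_1^t\Gamma(t-s)N(U(s))\,ds$, split the propagator into $K(t)+\mathcal{K}(t)$, and use Proposition~\ref{prop:decay-SK}. The quadratic nonlinearities are controlled by $\|u^2\|_{L^1}=\|u\|_{L^2}^2$, $\|uv\|_{L^1}\le\|u\|_{L^2}\|v\|_{L^2}$, and so on, together with the algebra estimate $\|u^2\|_{H^N}\le C\|u\|_{H^N}^2$; inserting the bootstrap rates gives $\|N(s)\|_{L^1}+\|N(s)\|_{H^N}\lesssim M_0^2(t)\,s^{-3/2}$, the slowest term being $u^2$. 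Projecting onto the conservative and dissipative directions and using estimates i), ii), v), the contributions take the form $\int_1^t\exp(-c(t-s))s^{-3/2}\,ds$ for the $\mathcal{K}$ part, and $\int_1^t\min\{1,(t-s)^{-3/4}\}s^{-3/2}\,ds$ respectively $\int_1^t\min\{1,(t-s)^{-5/4}\}s^{-3/2}\,ds$ for the $K$ part in the $u$- and $v$-components. By [\cite{BHN}, Lemma~5.2] these are bounded by $t^{-3/4}$ respectively $t^{-5/4}$, whence
\begin{equation*}
\|u(t)\|_{H^N}\le C E_N\,t^{-3/4}+C\,t^{-3/4}M_0^2(t),\qquad \|v(t)\|_{H^N}\le C E_N\,t^{-5/4}+C\,t^{-5/4}M_0^2(t).
\end{equation*}

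Multiplying by the weights and taking the supremum yields $M_0(t)\le C E_N+C M_0^2(t)$. Since $M_0$ is continuous, $M_0(1)\lesssim E_N$, and $E_N\le\varepsilon$ is small, a standard continuity argument closes the bootstrap and gives $M_0(t)\le 2CE_N$ uniformly in $t$; combined with the continuation criterion this produces the global solution and the stated decay. The main obstacle --- and the point where this improves on \cite{BHN} --- is the quadratic source $a_u u^2+b_u v^2+c_u uv$ in the equation for the \emph{non-dissipative} variable $u$, which decays only at the borderline rate $t^{-3/4}$ in three dimensions. The estimate survives precisely because the self-interaction $u^2$ decays like $t^{-3/2}$ and the convolution $\int_1^t(t-s)^{-3/4}s^{-3/2}\,ds\lesssim t^{-3/4}$ reproduces exactly the linear rate; one must also verify that the apparent non-integrable singularity $(t-s)^{-5/4}$ in the $v$-estimate near $s=t$ is tamed by the truncation $\min\{1,\cdot\}$ built into Proposition~\ref{prop:decay-SK}. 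Dimension $n=3$ is exactly the threshold at which these quadratic convolutions close with no null structure, which is why no further structural hypothesis on the source is needed for the purely [SK] block.
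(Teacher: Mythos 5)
Your proposal is correct and takes essentially the same route as the paper: the paper's proof of Theorem \ref{thm:SK} consists precisely in ``applying the procedure of Section \ref{subsect:nonlinear1}'', i.e.\ Duhamel's formula with the Green-function decomposition $K(t)+\mathcal{K}(t)$, the decay estimates of Proposition \ref{prop:decay-SK}, the convolution bounds of [\cite{BHN}, Lemma 5.2] applied to the slowest source term $u^2\sim s^{-3/2}$, and closing the inequality $M_0(t)\le CE_N+CM_0^2(t)$ by smallness. Your additional details (local well-posedness with a continuation criterion, the continuity/bootstrap argument, and the observation that the $\min\{1,\cdot\}$ truncation tames the $(t-s)^{-5/4}$ singularity) merely make explicit what the paper leaves implicit, and are consistent with its remark that global existence follows from the decay estimates via a Nishida-type functional.
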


\section{A step towards models of physical interest: \\when dissipation prevents time resonances}\label{sec:second-model}
In this section, we consider a hyperbolic system with quadratic source, whose linear part is exactly the linear operator (\ref{linear-PK-system}), while the coupling between the dissipative variables $u, v$ and $w$ lies in the quadratic source term of the equation for $w$. The system is the following:
\begin{equation}
\label{PK_system_w}
\begin{cases}
\partial_t \widehat{u}+i|\xi| \widehat{v}=\widehat{v^2},  \\
\partial_t \widehat{v}+i|\xi| \widehat{u}+\widehat{v}= \widehat{v^2},\\
\partial_t \widehat{w}+i|\xi| \widehat{w}= \widehat{vw}+\widehat{T}_{m(\xi, \eta)}(w,w).
\end{cases}
\end{equation}
The source term of the equation for $w$ is the sum of a Nonresonant Bilinear Form for the wave operator (as in Definition \ref{def:Nonresonant-Bil-Form},
where $T_{m(\xi, \eta)}$ is the bilinear pseudoproduct in (\ref{def-pseudoproduct}))
and a coupling term with the dissipative variables.
We will show that system (\ref{PK_system_w}) is well behaved for long times, despite the facts that:\\
$\bullet$ the last equation (the wave propagator) does not dissipate at all and exhibits  a quadratic nonlinearity;\\
$\bullet$ {the two addends in the source of $w$, namely $vw$ and $T_m$, have different natures: $m$ is a Nonresonant Bilinear Form for the wave propagator, see Definition \ref{def:Nonresonant-Bil-Form}; $vw$ does not have any special structure, but $v$ is a \emph{dissipative variable}. Their behaviors are different, as they appear with different phases in the Duhamel formula and we need to care of them separately.}\\
In the rest of this section we prove the global in time existence and decay of the smooth solutions to system (\ref{PK_system_w}). In spite of the lack of dissipation of the linear part (system (\ref{PK_system_w}) does not satisfy [SK], see Section \ref{subsec: spectral-analysis}), the coupling term $vw$ is able to transfer enough dissipation to the equation for $w$ to prevent the formation of singularities. Although being a first case study, this result is far from being obvious since in general solutions to hyperbolic systems with quadratic source are known to blow up, \cite{John2}. We define the following norm:
\begin{equation}
\label{X_norm_w}
\begin{aligned}
M_0(t)&=\sup_{t \ge 1}  \{t^{\frac{3}{4}} \|u(t)\|_{H^N} + t^{\frac{5}{4}} \|v(t)\|_{H^N} +  t^{\frac{3}{2}} |u(t)|_{L^\infty} + t^{\frac{5}{2}} |v(t)|_{L^\infty}+ t^\frac 34 \|x v (t)\|_{H^N}\\
& \quad 
+t^{-\varepsilon}\|w(t)\|_{H^N}+t |w(t)|_{L^\infty}+
 t |R w(t)|_{L^\infty} + t^{-\gamma} \|x f_w(t)\|_{L^2}+ \| \Lambda x f_w(t)\|_{H^1}+t^{-1} \| |x|^2 \Lambda f_w (t)\|_{H^1} \}.
\end{aligned}
\end{equation}
The result of this section is stated below.
\begin{thm}{[Global existence and decay estimates for the 2nd model violating [SK]]}\label{thm:PSK2}
Consider the Cauchy problem associated with system (\ref{PK_system_w}) and initial data at initial time $t=1$ belonging to $H^N(\mathbb{R}^3).$There exists $\varepsilon >0$ small enough and $N>\frac{5}{2}$ large enough such that, for
$$E_N \le \varepsilon \quad \text{defined in} \quad \eqref{def:norm-initial-data},$$
system (\ref{PK_system_w}) admits a unique global in time solution
$$U=(u, v, w) \in C([1, \infty); H^N(\mathbb{R}^3)) \cap C^1([1, \infty); H^{N-1}(\mathbb{R}^3)),$$
with time decay estimates for $t \ge 1$ provided by the inequality
\begin{align*}
M_0(t) \le C E_N.
\end{align*}
\end{thm}

\begin{Rmk}[A comment on the choice of the model]
In this first investigative paper, we limit ourselves to the toy-model case of (\ref{PK_system_w}). This is a simple and \emph{ad hoc} system, whose nonlinearity contains a special term, a so-called Nonresonant Bilinear Form. Thanks to the spectral analysis of Section \ref{subsec: spectral-analysis}, we know that the variable $w$ is not dissipated at all, while dissipation involves the [SK]-variables $u, v$. Thus, the intuition is that when the nonlinearity is quadratic in $w$ (the non-dissipated variable), a nonresonant/null structure is needed, otherwise singularities occur. Notice however by Definition \ref{def:Nonresonant-Bil-Form} that the whole nonlinear term (the three sources of the three equations) is not nonresonant with respect to the full system \eqref{PK_system_w}, so the nonresonance condition alone (on the third equation) is not enough and dissipation is still important in this framework. This is why, besides being a simple and \emph{ad hoc} example, system \eqref{PK_system_w} is interesting for our scope, as a first concrete case where the structure of the nonlinearity and dissipation mutually cooperate with a successful outcome, as proved in Theorem \ref{thm:PSK2}. Since \eqref{PK_system_w} is neither [SK] nor completely nonresonant, the setting and the result are new. There are finally two last remarks.\\
$\bullet$ The equations for $u, v$ in \eqref{PK_system_w} show a very typical [SK] structure and are actually quite general.\\
$\bullet$ This method also applies to more general quadratic source terms depending on the three variables $u, v, w$, at the price of a consistent additional number of steps in terms of normal form transformations and weighted/localized estimates, because of the different (slower) decay rates of $u, v$. We avoided to detail this general case as it does not bring new interesting insights into the problem. Physically relevant models will be instead studied in a future work (see the introduction).
\end{Rmk}

\subsection{Decay estimates on the nonlinear system in the low frequency regime}
The spectral analysis and the decay estimates on the linearized system are provided in Section \ref{subsec:decay-linear}:
$$\|u(t)\|_{H^N} \approx t^{-\frac{3}{4}}, \quad \|v(t)\|_{H^N} \approx t^{-\frac{5}{4}}, \quad |u(t)|_{L^\infty} \approx t^{-\frac{3}{2}}, \quad |v(t)|_{L^\infty} \approx t^{-\frac{5}{2}}.$$

Now we need to estimate $\|xv\|_{H^{N}}$. 
\begin{lem}
Let $U(t)$ be defined in (\ref{dec:U}), with $U_0 \in L^1(\R^3) \cap H^s(\R^3)$ Then, for any multi-index $\beta$ with $|\beta| \le s$, there exists $C>0$ such that the following decay estimate holds for $t \ge 1$:
\begin{align*}
\|D^\beta x L_2 K(t) U_0\|_{L^2} &\le C \min\{1, t^{-\frac 34-\frac{|\beta|}{2}}\} \|L_1 U_0\|_{L^1} + C \min\{1, t^{-\frac 54-\frac{|\beta|}{2}}\} \|L_2 U_0\|_{L^1}\\
& {\quad + C \min\{1, t^{-\frac 54-\frac{|\beta|}{2}}\} \|x L_1 U_0\|_{L^1} + C \min\{1, t^{-\frac 74-\frac{|\beta|}{2}}\} \|xL_2 U_0\|_{L^1}}.
\end{align*}
Moreover, let $v(t)$ be the second component of the solution $U(t)=(u(t), v(t), w(t))$ to the first two equations of \eqref{PK_system_w}. It holds that 
$$\| x L_2 K(t) v(t)\|_{H^N} \lesssim t^{-\frac 34}{(E_N+M_0^2(t))}.$$
\end{lem}
\begin{proof}
We compute
\begin{align*}
\|D^\beta  xL_2 K(t) U_0\|_{L^2}^2 &  \lesssim \int_{\R^3} \left|\de_\xi (|\xi| \exp(-c|\xi|^2 t))\right|^2 |\widehat{u}_0|^2 |\xi|^{2\beta} \, d\xi + \int_{\R^3}  |\xi|^2 \exp(-c|\xi|^2 t) |\de_\xi \widehat{u}_0|^2 |\xi|^{2\beta} \, d\xi\\
& \quad +  \int_{\R^3} \left|\de_\xi (|\xi|^2 \exp(-c|\xi|^2 t))\right|^2 |\widehat{v}_0|^2 |\xi|^{2\beta} \, d\xi + \int_{\R^3}  |\xi|^4 \exp(-c|\xi|^2 t) |\de_\xi \widehat{v}_0|^2 |\xi|^{2\beta} \, d\xi\\
& \lesssim \int_{\R^3}  \exp(-c|\xi|^2 t) |\widehat{u}_0|^2  |\xi|^{2\beta} \, d\xi + t^2  \int_{\R^3}  |\xi|^4 \exp(-c|\xi|^2 t) |\widehat{u}_0|  |\xi|^{2\beta} \, d\xi \\
& \quad + \int_{\R^3}  |\xi|^2 \exp(-c|\xi|^2 t) |\de_\xi \widehat{u}_0|^2  |\xi|^{2\beta} \, d\xi \\
& \quad + \int_{\R^3}  |\xi|^2 \exp(-c|\xi|^2 t) |\widehat{v}_0|^2  |\xi|^{2\beta} \, d\xi + t^2  \int_{\R^3}  |\xi|^6 \exp(-c|\xi|^2 t) |\widehat{v}_0|  |\xi|^{2\beta} \, d\xi \\
& \quad + \int_{\R^3}  |\xi|^4 \exp(-c|\xi|^2 t) |\de_\xi \widehat{v}_0|^2  |\xi|^{2\beta} \, d\xi \\
%& = \int_{S^2} \int_0^\infty e^{-|\xi|^2t} |\xi|^{2+2\beta}  |\widehat{u}_0|^2  \, d|\xi| \, %d\zeta + t^2  \int_{S^2} \int_0^\infty e^{-|\xi|^2t} |\xi|^{6+2\beta}  |\widehat{U}_0|^2   %\, d|\xi| \, d\zeta \\
%& \quad +   \int_{S^2}   \int_0^\infty  |\xi|^{4+2\beta} \exp(-c|\xi|^2 t) |\de_\xi %\widehat{U}_0|^2  \, d|\xi| \, d\zeta \\ 
&\lesssim t^{-\frac 3 2-|\beta|}\|u_0\|_{L^1}^2+ t^{-\frac 52-|\beta|} \|x u_0\|_{L^1}^2+t^{-\frac 52-|\beta|}\|v_0\|_{L^1}^2 + t^{-\frac 72-|\beta|} \|xv_0\|_{L^1}^2.
\end{align*}
For the nonlinear equation, we simply observe that
\begin{align*}
\|x  v\|_{H^N} & \lesssim \min\{1, t^{-\frac 3 4}\} E_N +  \int_1^t (t-s)^{-\frac 34} \|v\|_{H^N}^2 \, ds + \int_1^t (t-s)^{-\frac 54} \|xv\|_{H^N} \|v\|_{H^N} \, ds\\
& \lesssim  \min\{1, t^{-\frac 3 4}\} E_N + \min\{1, t^{-\frac 34}\} M_0^2(t).
\end{align*}
\end{proof}
Now we are left with the estimates for $w$. We treat each term of the norm (\ref{X_norm_w}) separately. 

In the rest of this section, we prove Theorem \ref{thm:PSK2}.
Let us start from the Duhamel formula for $w(t)$,
\begin{equation}
\label{def:Duhamel-formula-w}
w(t)=w_0+\int_1^t  \exp(i \Lambda (t-s)) T_{m(\xi, \eta)}(w, w) \, ds + \int_1^t  \exp(i \Lambda (t-s))  vw \, ds.
\end{equation}

We will use also the \emph{profile} for the three variables in Fourier space $\widehat{u}, \widehat{v}, \widehat{w}$:
\begin{equation}\label{def:profiles}
\begin{aligned}
\widehat{f}(\xi, t)=
\begin{pmatrix}
\widehat{f}_u \\ \widehat{f}_v \\ \widehat{f}_w 
\end{pmatrix}
=\exp(-E(i\xi)t) 
\begin{pmatrix}
\widehat{u}\\
\widehat{v}\\
\widehat{w}
\end{pmatrix},
\end{aligned}
\end{equation}
with $E(i\xi)$ in (\ref{Green_Partial_Kawashima}).
In the low frequency regime, we recall the following expansions from Section \ref{subsec:decay-linear}:
\begin{align*}
\widehat{U}&=\begin{pmatrix}
\widehat{u}\\
\widehat{v}\\
\widehat{w}
\end{pmatrix}= \exp(E(i\xi)t) 
\begin{pmatrix}
\widehat{f}_u \\ \widehat{f}_v \\ \widehat{f}_w 
\end{pmatrix}
=\begin{pmatrix}
\begin{pmatrix}
1 & -i|\xi| \\
-i|\xi|  & -|\xi|^2 
\end{pmatrix} \exp(-|\xi|^2t)  & 0 \\
0 & \exp(-i|\xi|t)
\end{pmatrix} \begin{pmatrix}
\widehat{f}_u \\ \widehat{f}_v \\ \widehat{f}_w 
\end{pmatrix}+O(|\xi|^2).
\end{align*}
Then at main order (the remainders decay faster in $L^2$ and $L^\infty$; in fact, the analysis of the nonlinear equations for $u,v$ is already known and detailed in Section \ref{subsec:SK}) one gets:
\be\label{profile-main-order}
\begin{aligned}
 \widehat{u}(\xi, t) = \exp(-|\xi|^2t) \widehat{f}_u, \quad  \widehat{v}(\xi, t)= |\xi| \exp(-|\xi|^2 t) \widehat{f}_u, \quad \widehat{w}(\xi, t)=\exp(-i|\xi| t) \widehat{f}_w.
\end{aligned}
\ee
Now we write the Duhamel formula for the profile $\widehat{f}_w$:
\be \label{form:Duhamel-w}\begin{aligned}
\widehat{f}_w(\xi, t)  = \widehat{w}_0(\xi) & + \int_1^t  \exp(i|\xi|s)\widehat{T}_{m(\xi, \eta)}(\exp(-i |\xi-\eta| s) \widehat{f}_w, \exp(-i |\eta| s)  \widehat{f}_w) \, ds \\
& + \int_1^t \int \exp(i\phi(\xi, \eta)s) \widehat{f}_w(\xi-\eta, s)|\eta| \widehat{f}_u(\eta, s) \, d\eta \, ds,
\end{aligned}\ee
where we used (\ref{profile-main-order}) and we recall from Section \ref{subsec: spectral-analysis} that %in the low frequency regime $|\eta| \ll 1$, 
\begin{equation}
\label{def:phase-low-frequencies}
\phi(\xi, \eta)=|\xi|-|\xi-\eta| +i \left( \frac 12 - \frac 12 \sqrt{1-4|\eta|^2}\right).  %\frac{2i|\eta|^2}{1+\sqrt{1-4|\eta|^2}} \approx |\xi|-|\xi-\eta| + i|\eta|^2 .
\end{equation}
It is easy to see that the time-resonant set of the phase $\phi$ denoted as $\mathcal{T}_\phi=\{ (\xi, \eta) \,| \, \eta=0\}$ is quite big, while the space-resonant set is given by 
\begin{align}\label{def:space-set}
\mathcal{S}_\phi=\{ (\xi, \eta) \,| \, \de_\eta \phi = 0\}=\left \{ (\xi, \eta) \, \Big| \, \frac{\xi-\eta}{|\xi-\eta|} +  \frac{2i\eta}{\sqrt{1-4|\eta|^2}}=0\right\} =\emptyset.
\end{align}

This implies that the space-time resonant set is also empty $\mathcal{R}_\phi=\emptyset$, which allows us to close the estimates by relying on the space-time resonant method. Notice that $\mathcal{T}_\phi$ and $\mathcal{R}_\phi$ are the time and space resonant sets that are only related to the third equation of system  \eqref{PK_system_w}, namely $w$ and that is not true that the space-time resonant set of the whole system (with all the source terms) is empty. 

Notice also that the first two terms of (\ref{def:Duhamel-formula-w}) and (\ref{form:Duhamel-w})  have already been estimated in every functional space involved in $M_0(t)$ in (\ref{X_norm_w}). These results, due to \cite{Pusateri}, are listed in Proposition \ref{prop:PS-nonlinear}. Therefore, we will only focus on the last integral, 
\begin{equation}
\label{int-last-w}
\int_1^t  \exp(i \Lambda (t-s))  vw \, ds.
\end{equation}

Similarly, to estimate the profile $f_w$, we will only write down the computations for
\begin{equation}
\label{int-last-fw1}
\int_1^t \int \exp(i\phi(\xi, \eta)s) \widehat{f}_w(\xi-\eta, s)|\eta| \widehat{f}_u(\eta, s) \, d\eta \, ds.
\end{equation}
The next lemma plays a crucial role in our analysis.
\begin{lem}\label{lem:cutoff}
Consider $\exp(i \phi s)$, where $\phi$ is given in \eqref{def:phase-low-frequencies}. In the regime $|\eta| > \frac 1 4$, it holds that
\begin{align*}
\mathcal{R}(\exp(i\phi s))&= \exp(- c s), \quad \frac 12 (1-\frac{\sqrt{3}}{2}) \le c:=\mathcal{R}\left(\frac 12 - \frac{1}{2}\sqrt{1-4|\eta|^2}\right) \le  \frac 12.
\end{align*}
\end{lem}
\begin{proof}
it is enough to observe that $\sqrt{1-4|\eta|^2}= i \sqrt{4|\eta|^2-1}$ for $|\eta| \ge \frac 12$. Therefore for $|\eta| \ge \frac 12$ we have $\mathcal{R}(i\phi)=-\frac 12$. It remains to consider $\frac 1 4 < |\eta| < \frac 12$, which provides the lower bound.
\end{proof}
The fundamental consequence is the fact that in the regime $|\eta| > \frac 14$ the operator provides an exponential damping term in time. Therefore in that case there is no need of exploiting the oscillations through the space-time resonance method: it will be enough to apply a crude Cauchy-Schwarz estimate to close the bootstrap. This will be done in Section \ref{sec:highfreq}. In the complementing case where $|\eta| \le \frac 14$ instead, that contribution has the qualitative behavior $- \frac 12 (1-\sqrt{1-4|\eta|^2}) \approx -|\eta|^2$: there is no exponential time decay in this case and a more careful analysis, based on the space-time resonance method \cite{Pusateri, GMS1}, is needed. In order to rigorously implement the previous reasoning, we introduce a cut-off function 
\begin{align}\label{def:cutoff}
\psi( \eta)=\chi(4|\eta|), \quad \text{where} \quad  \chi(|\eta|) \in C_c^\infty, \;  \chi(|\eta|)=\begin{cases}
1 \, \text{for} \, |\eta| \le \frac 12, \\
0 \, \text{for} \, |\eta| \ge 1,
\end{cases}
\end{align}
and we rewrite the above integral
\begin{align}
\label{int-last-fw}
& \int_1^t \int \exp(i\phi(\xi, \eta)s) \widehat{f}_w(\xi-\eta, s)|\eta| \widehat{f}_u(\eta, s) \, d\eta \, ds\notag\\
& = \int_1^t \int \exp(i\phi(\xi, \eta)s) \widehat{f}_w(\xi-\eta, s)|\eta| \widehat{f}_u(\eta, s)\psi(\eta) \, d\eta \, ds\notag\\
&\quad +\int_1^t \int \exp(i\phi(\xi, \eta)s) \widehat{f}_w(\xi-\eta, s)|\eta| \widehat{f}_u(\eta, s)(1-\psi(\eta)) \, d\eta.
\end{align}
In this section we analyze the decay of the first term in the right-hand side. The high-frequency regime in the latter is postponed to the last section.

\paragraph*{Estimate of $\|w(t)\|_{H^N}$.}
\begin{align*}
\int_1^t \|\exp(i\Lambda (t-s)) \, uw \|_{H^N} \, ds 
\le 
\int_1^t \|v(s)\|_{H^N} \|w(s)\|_{H^N} \, ds  
\lesssim
M_0^2(t) \int_1^t s^{-\frac{5}{4}+\varepsilon} \, ds 
\lesssim (1+ t^{-\frac{1}{4}+\varepsilon}) M_0^2(t).
\end{align*}

From Proposition \ref{prop:PS-nonlinear} and formula (\ref{def:Duhamel-formula-w}), it follows that $\|w(t)\|_{H^N} \le t^\varepsilon M_0^2(t)$ for $t \ge 1$.

\paragraph{Estimate of $|w(t)|_{L^\infty}$.}
We use the dispersive properties of the wave propagator $\exp(i\Lambda t)$ in Lemma A.1.
\begin{align*}
\int_1^t |\exp(-i\Lambda(s-t)) vw(s)|_{L_x^\infty}\, ds 
 & \lesssim \int_1^t \frac{1}{(t-s)} ( \|vw(s)\|_{\dot{W}^{2,1}} +  \|\Lambda vw(s)\|_{\dot{W}^{1,1}} ) \, ds \\
& \lesssim  \int_1^t (t-s)^{-1} \|v(s)\|_{H^N} \|w(s)\|_{H^N} \, ds\\
& \lesssim M_0^2(t) \int_1^t (t-s)^{-1}s^{-\frac{5}{4}+\varepsilon} \, ds  \lesssim  t^{-1}M_0^2(t),
\end{align*}

Thus, from (\ref{def:Duhamel-formula-w}) and Proposition \ref{prop:PS-nonlinear}, 
$|w(t)|_{L^\infty} \le t^{-1} M_0^2(t)$.

\paragraph{Estimate of $|R w(t)|_{L^\infty}$.} The Riesz transform $R$ does not play any role on the estimates for the dissipative variables $u, v$.
{In fact, even though the Riesz transform is bounded from $L^p$ to itself only for $1<p<+\infty$, we can use exactly the above strategy 
\begin{align*}
\int_1^t |R \exp(-i\Lambda(s-t)) vw(s)|_{L_x^\infty}\, ds 
 & \lesssim \int_1^t \frac{1}{(t-s)} ( \|R vw(s)\|_{\dot{W}^{2,1}} +  \|\Lambda R vw(s)\|_{\dot{W}^{1,1}} ) \, ds \\
 & \lesssim  \int_1^t (t-s)^{-1} \|Rv(s)\|_{H^N} \|R w(s)\|_{H^N} \, ds\\
 & \lesssim  \int_1^t (t-s)^{-1} \|v(s)\|_{H^N} \|w(s)\|_{H^N} \, ds,
\end{align*}
where in the last inequality we used the boundedness of the Riesz transform in $H^N$. Then, we can proceed as before, so obtaining the same decay of the previous paragraph.}
From Proposition \ref{prop:PS-nonlinear}, one has that $|R w(t)|_{L^\infty} \le t^{-1} M_0^2(t)$.

\paragraph{Estimate of $\|xf_w(t)\|_{L^2}$.}
This is the first point where the space-time resonance method enters the game. As explained just before \eqref{int-last-fw1}, we only need to bound the following integral
\begin{align}
\int_1^t \int \exp(i\phi(\xi, \eta)s) \widehat{f}_w(\xi-\eta, s)|\eta| \widehat{f}_u(\eta, s) \psi(\eta) \, d\eta \, ds,
\end{align}
where $\psi(\eta)$ is in \eqref{def:cutoff}.
Differentiating with respect to $\xi$ we have:
\begin{align}& \int_1^t \int is \partial_\xi \phi  \exp(i\phi s) \widehat{f}_w(\xi-\eta, s) |\eta| \widehat{f}_u(\eta, s)  \psi(\eta) \, d\eta \, ds \quad (i)\notag\\\
& \quad + \int_1^t \int \exp(i\phi s) \partial_\xi \widehat{f}_w(\xi-\eta, s) |\eta| \widehat{f}_u(\eta, s) \psi(\eta) \, d\eta \, ds. \quad (ii)\label{eq:spacenorm}
\end{align}
Let us start with $(ii)$. We have
\begin{align*}
\|(ii)\|_{L^2} & \le \int_1^t \|\exp(-i\Lambda s)x f_w, \, \Lambda \exp(\Delta s){f}_u\|_{L^2} \, ds \lesssim  \int_1^t \| \exp(-i\Lambda s)x{f}_w \|_{L^6} \| \Lambda \exp(\Delta s){f}_u \|_{L^3} \, ds  \\
& \lesssim  \int_1^t  \|x{f}_w(s)\|_{H^1} \|v(s)\|_{L^3} \, ds \lesssim  \int_1^t  \|x{f}_w(s)\|_{H^1} \|v(s)\|_{H^N}  \, ds\lesssim M_0^2(t) \int_1^t s^{\gamma - \frac{5}{4}} \, ds \le  (1+t^{-\frac{1}{4}+\gamma}) M_0^2(t),
\end{align*}
where we used standard Sobolev embeddings and $v \approx \Lambda\exp(\Delta s) f_u$ from (\ref{profile-main-order}). Now we use the space-time resonance method to deal with $(i)$, by integrating by parts in 
 $\eta$. We use the following identity: 
$$\exp(i\phi s)=\dfrac{\partial_\eta \exp(i\phi s) \cdot \de_\eta \phi}{is |\de_\eta \phi|^2}.$$ Therefore, since from \eqref{profile-main-order} it follows that $|\eta| \widehat{f}_u(\eta, s) \approx \exp(|\eta|^2 s) \widehat{v}(\eta, s)$, and (occasionally omitting signs and constants) we obtain
\begin{align*}
(i)&=-\int_1^t \int \de_\xi \phi \cdot \frac{\de_\eta \phi}{|\de_\eta \phi|^2} \exp(i\phi s) \de_\eta (\widehat{f}_w(\xi-\eta, s) |\eta| \widehat{f}_u(\eta, s) \psi(\eta)) \, d\eta \, ds\\
& \quad -\int_1^t \int \de_\eta (\de_\xi \phi \cdot \frac{\de_\eta \phi}{|\de_\eta \phi|^2}) \exp(i\phi s)\widehat{f}_w(\xi-\eta, s) |\eta| \widehat{f}_u(\eta, s) \psi(\eta) \, d\eta \, ds\\
& \approx \int_1^t \int \de_\xi \phi \cdot \frac{\de_\eta \phi}{|\de_\eta \phi|^2} \exp(i\phi s)  \de_\eta (\widehat{f}_w(\xi-\eta, s) \exp(|\eta|^2 s) \widehat{v}(\eta, s)  \psi(\eta)) \, d\eta \, ds\\
& \quad +\int_1^t \int \de_\eta (\de_\xi \phi \cdot \frac{\de_\eta \phi}{|\de_\eta \phi|^2}) \exp(i\phi s)\widehat{f}_w(\xi-\eta, s)  \exp(|\eta|^2 s) \widehat{v}(\eta, s)  \psi(\eta) \, d\eta \, ds.
\end{align*}
We have
\begin{align*}
(i) & \approx \int_1^t \int \de_\xi \phi \cdot \frac{\de_\eta \phi}{|\de_\eta \phi|^2} \exp(i(|\xi|-|\xi-\eta|) s)  \de_\eta \widehat{f}_w(\xi-\eta, s) \widehat{v}(\eta, s)  \psi(\eta) \, d\eta \, ds \quad (i)_1  \\
& \quad +  \int_1^t \int \de_\xi \phi \cdot \frac{\de_\eta \phi}{|\de_\eta \phi|^2}\exp(i(|\xi|-|\xi-\eta|) s)  \widehat{f}_w(\xi-\eta, s) \de_\eta  \widehat{v}(\eta, s)  \psi(\eta) \, d\eta \, ds \quad (i)_2 \\
& \quad +  \int_1^t \int s  \de_\xi \phi \cdot \frac{\de_\eta \phi}{|\de_\eta \phi|^2} \exp(i(|\xi|-|\xi-\eta|) s)  \widehat{f}_w(\xi-\eta, s)  |\eta| \widehat{v}(\eta, s)   \psi(\eta) \, d\eta \, ds \quad (i)_3 \\
& \quad + \int_1^t \int \de_\xi \phi \cdot \frac{\de_\eta \phi}{|\de_\eta \phi|^2} \exp(i(|\xi|-|\xi-\eta|) s) \widehat{f}_w(\xi-\eta, s) \widehat{v}(\eta, s)  \de_\eta \psi(\eta) \, d\eta \, ds \quad (i)_4\\
&\quad + \int_1^t \int \de_{\eta\xi} \phi   \cdot \frac{\de_\eta \phi}{|\de_\eta \phi|^2} \exp(i(|\xi|-|\xi-\eta|) s) \widehat{f}_w(\xi-\eta, s) \widehat{v}(\eta, s)  \psi(\eta) \, d\eta \, ds \quad (i)_5\\
&\quad + \int_1^t \int   \de_\xi \phi   \cdot \frac{\de_{\eta \eta} \phi}{|\de_\eta \phi|^2} \exp(i(|\xi|-|\xi-\eta|) s) \widehat{f}_w(\xi-\eta, s) \widehat{v}(\eta, s)  \psi(\eta) \, d\eta \, ds \quad (i)_6\\
&\quad -2 \int_1^t \int \de_\xi \phi\cdot \frac{\de_\eta \phi \cdot \de_{\eta \eta} \phi }{|\de_\eta \phi|^4} \exp(i(|\xi|-|\xi-\eta|) s) \widehat{f}_w(\xi-\eta, s) \widehat{v}(\eta, s)  \psi(\eta) \, d\eta \, ds. \quad (i)_7.
\end{align*}
We need the following intermediate result.
\begin{lem}\label{lem-crucial}
In the low-frequency regime $|\eta| \le \frac 14$, the following relations hold:
\begin{align}\label{def:symbol-lem}
\frac{\de_\eta \phi}{|\de_\eta \phi|^2} &= {\mu}_0 m_2(\eta)  + \tilde{m}_2(\eta), \\
\de_\xi \phi \cdot \frac{\de_\eta \phi}{|\de_\eta \phi|^2} & = {\mu}^i_0 ({\mu}_0 m_2(\eta)  + \tilde{m}_2(\eta))=\tilde \mu_0^i m_2(\eta) + \mu_0^i \tilde m_2(\eta), \label{def:symbol-lem1}\\
 \de_{\eta\xi} \phi   \cdot \frac{\de_\eta \phi}{|\de_\eta \phi|^2} & = \frac{{\mu}_0^{ii}}{|\xi-\eta|}({\mu}_0 m_2(\eta)  + \tilde{m}_2(\eta))=\frac{\tilde \mu_0^{ii}}{|\xi-\eta|}m_2(\eta) +\frac{\mu_0^{ii}}{|\xi-\eta|}\tilde m_2(\eta), \label{def:symbol-lem2}\\
|\de_{\xi} \phi|^2  \frac{\de_\eta \phi}{|\de_\eta \phi|^2} & = \mu_0^{iii}({\mu}_0 m_2(\eta)  + \tilde{m}_2(\eta))=\tilde{\mu}_0^{iii} m_2(\eta) + \mu_0^{iii} \tilde{m}_2(\eta) , \label{def:symbol-lem21}\\
 \de_{\xi} \phi   \cdot \frac{\de_{\eta\eta} \phi}{|\de_\eta \phi|^2} & = \mu_0^i (\frac{\mu_0^{iv}}{|\xi-\eta|}m_2(\eta)+ m_1(\eta))=\frac{\tilde \mu_0^{iv}}{|\xi-\eta|}m_2(\eta)+\mu_0^i m_1(\eta),  \label{def:symbol-lem3}\\
 | \de_{\xi} \phi|^2\frac{\de_{\eta\eta} \phi}{|\de_\eta \phi|^2} & =  \mu_0^{iii} (\frac{\mu_0^{iv}}{|\xi-\eta|}m_2(\eta)+ m_1(\eta))=\frac{\tilde \mu_0^{v}}{|\xi-\eta|}m_2(\eta)+\mu_0^{iii} m_1(\eta), \label{def:symbol-lem31}\\
   \de_\xi \phi \cdot \de_{\xi \eta} \phi  \frac{\de_{\eta} \phi}{|\de_\eta \phi|^2} & = \mu_0^i (\frac{\tilde{\mu}_0^{ii}}{|\xi-\eta|}m_2(\eta) + \frac{\mu_0^{ii}}{|\xi-\eta|}\tilde{m}_2(\eta)) = \frac{\tilde{\mu}_0^{vi}}{|\xi-\eta|}m_2(\eta) + \frac{\mu_0^{vi}}{|\xi-\eta|}\tilde{m}_2(\eta), \label{def:symbol-lem5}\\
\de_\xi \phi\cdot \frac{\de_\eta \phi \cdot \de_{\eta \eta} \phi }{|\de_\eta \phi|^4} & = \frac{\tilde{\mu}_0^{vii}}{|\xi-\eta|}m_4(\eta) + \mu_0^{vii} m_3(\eta), \label{def:symbol-lem4}\\
|\de_\xi \phi|^2 \frac{\de_\eta \phi \cdot \de_{\eta \eta} \phi }{|\de_\eta \phi|^4} & = \frac{\tilde{\mu}_0^{viii}}{|\xi-\eta|}m_4(\eta) + \mu_0^{viii} m_3(\eta), \label{def:symbol-lem41}
\end{align} 
where $\mu_0=\mu_0(\xi, \eta), \tilde \mu_0=\tilde \mu_0(\xi, \eta)$ (possibly with apex i, ii, $\cdots$) denote any symbol belonging to $\mathcal{B}_0$, while we use the notation $m_k$ for any Fourier multiplier of order $k$. 
\end{lem}
\begin{proof}
We start with \eqref{def:symbol-lem}. We explicitly write down
\begin{align*}
\frac{\de_\eta \phi}{ |\de_\eta \phi|^2}=\frac{\xi-\eta}{|\xi-\eta|} (1-4|\eta|^2) + 2i\eta \sqrt{1-4|\eta|^2},
\end{align*}
where we recall that $|\eta|^2 \le \frac{1}{16}$, so that $1-4|\eta|^2 > \frac 34$. Introducing the notation $\mu_0:= \frac{\xi-\eta}{|\xi-\eta|} \in \mathcal{B}_0$ and noticing that $m_2(\eta)=1-4|\eta|^2, \, \tilde m_2(\eta)=2i\eta \sqrt{1-4|\eta|^2}$ are Fourier multipliers, the proof of \eqref{def:symbol-lem} is over. 
The decomposition  \eqref{def:symbol-lem1} readily follows from Lemma \ref{lemC3}, since $\de_\xi \phi=\frac{\xi}{|\xi|}-\frac{(\xi-\eta)}{|\xi-\eta|} \in \mathcal{B}_0$.
To prove \eqref{def:symbol-lem2}, it is enough to observe that for $j=1,2,3$, 
\begin{align*}
\de_{\eta_j\xi_j} \phi=\de_{\eta_j} \frac{(\eta_j-\xi_j)}{|\eta-\xi|} = \frac{1}{|\xi-\eta|} - \frac{(\xi_j-\eta_j)^2}{|\xi-\eta|^3}. 
\end{align*} 
Since $\de_\xi \phi \in \mathcal{B}_0$, the decomposition \eqref{def:symbol-lem21} follows from \eqref{def:symbol-lem1} and Lemma \ref{lemC3}. 
Besides, \eqref{def:symbol-lem3} follows from Lemma \ref{lemC3} and
\begin{align*}
\frac{\de_{\eta_j\eta_j} \phi}{|\de_\eta \phi|^2}= \left(\frac{-1}{|\xi-\eta|} + \frac{(\xi_j-\eta_j)^2}{|\xi-\eta|^3}\right) (1-4|\eta|^2)  + 2i \sqrt{1-4|\eta|^2} + \frac{i8\eta_j^2}{\sqrt{1-4|\eta|^2}}.
\end{align*}
From \eqref{def:symbol-lem3}, the fact that $\de_\xi \phi\in \mathcal{B}_0$ and Lemma \ref{lemC3} we also obtain \eqref{def:symbol-lem31}.
Next, \eqref{def:symbol-lem5} follows from \eqref{def:symbol-lem2} and Lemma \ref{lemC3}.
Lastly, observe that from \eqref{def:symbol-lem} and \eqref{def:symbol-lem3} we have
\begin{align*}
\de_\xi \phi\cdot \frac{\de_\eta \phi \cdot \de_{\eta \eta} \phi }{|\de_\eta \phi|^4}&=( {\mu}_0 m_2(\eta)  + \tilde{m}_2(\eta)) \left(\frac{\tilde \mu_0^{iv}}{|\xi-\eta|}m_2(\eta)+\mu_0^i m_1(\eta)\right),
\end{align*}
which directly provides \eqref{def:symbol-lem4}, while \eqref{def:symbol-lem41} follows from \eqref{def:symbol-lem} and Lemma \ref{lemC3}. The proof is over.
\end{proof}
Now we use the above lemma to deal with the terms $(i)_1-(i)_7$.
We start with $(i)_1$. Using Lemma \ref{lem-crucial} and Lemma \ref{lem3}, we have
\begin{align}\label{eq:a}
|(i)_1| & \lesssim \int_1^t \| x f_w\|_{L^{6}} \|v \|_{W^{2,3 }} \lesssim M_0(t)^2 \int_1^t s^{\gamma-\frac 54} \, ds \lesssim t^\gamma M_0^2(t). 
\end{align}
Now we deal with $(i)_2$. We use the dispersive estimate \eqref{est:app-Lp} with $p=6-\delta$, so that
\begin{align}\label{eq:b}
|(i)_2|& \lesssim  \int_1^t \|e^{i\Lambda s} f_w\|_{L^{6-\delta}} \|x v\|_{W^{2, 3+\delta}} \, ds \lesssim \int_1^t s^{-\frac 23 +\delta} \|\Lambda^{\frac 43+\delta} f_w\|_{L^{\frac 65+\delta}} \|xv\|_{H^N} \, ds\notag \\
& \lesssim \int_1^t s^{-\frac 23 +\delta} \|x f_w\|_{H^2} \|xv\|_{H^N} \, ds  \lesssim  M_0(t)^2 \int_1^t s^{\gamma+\delta - \frac 23 - \frac 34} \, ds \lesssim  t^\gamma M_0^2(t).
\end{align}
Consider the term $(i)_3$. Using again the dispersive estimate and using also that from \eqref{PK_Decay_estimates} we have $\||\eta| \widehat{v}\|_{L^2} \approx t^{-\frac 74}M_0(t)$, we get
\begin{align*}
|(i)_3|& \lesssim \int_1^t  s\|e^{i\Lambda s} f_w\|_{L^{6-\delta}} \||\eta| \widehat{v}\|_{W^{2, 3+\delta}} ds  \lesssim \int_1^t s^{1-\frac 2 3 +\delta}  \| \Lambda^{\frac 43+\delta} f_w\|_{L^{\frac 65+\delta}} \||\eta| \widehat{v}\|_{H^N} \, ds \\
& \lesssim \int_1^t s^{\frac 13 + \delta} \|x f_w\|_{H^2} \||\eta| \widehat{v}\|_{H^N} \, ds  \lesssim M_0(t)^2 \int_1^t s^{\frac 13 - \frac 74 + \delta + \gamma} \, ds  \lesssim t^\gamma M_0(t)^2.
\end{align*}
The term $(i)_4$ is also analogous since the cut-off does not play any role and its derivatives are uniformly bounded. %Therefore we have
%\begin{align}\label{est:d}
%|(i)_4|&\lesssim \int_1^t \|w\|_{L^3} \|\Lambda^{-1} v\|_{W^{2,6}}  \, ds  \lesssim \int_1^t \|w\|_{H^N} \|v\|_{H^2} \lesssim M_0(t)^2 \int_1^t s^{\eps-\frac 54} \, ds \lesssim t^\gamma M_0^2(t).
%\end{align}
We now turn to $(i)_5$. We have using \eqref{ineq:frac-int1} that 
\begin{align}
|(i)_5|& \lesssim \int_1^t \|\Lambda^{-1} f_w\|_{L^6} \|v\|_{W{2,6}} + \| f_w\|_{L^3} \|\Lambda^{-1}v\|_{W^{2,6}} \, ds\notag \\
& \lesssim \int_1^t \|w\|_{H^N} \|v\|_{H^N} \, ds \lesssim M_0(t)^2 \int_1^t s^\eps s^{-\frac 54} \, ds \lesssim t^\gamma M_0(t)^2. \label{est:i5}
\end{align}
The terms $(i)_6-(i)_7$ are really similar to the previous one, therefore we omit them.
 Hereafter $\mu_0=\mu_0(\xi, \eta), \tilde \mu_0=\tilde \mu_0(\xi, \eta)$ (possibly with apex i, ii, $\cdots$) denote any symbol belonging to $\mathcal{B}_0$.
\paragraph{Estimate of $\|\Lambda x f_w(t) \|_{H^1}$.}
It consists in estimating the terms of the previous paragraph with all the terms of the integrand multiplied by $|\xi|$. Those estimates are really similar and therefore we omit them. 

\paragraph{Estimate of $\||x|^2\Lambda f_w(t) \|_{H^1}$.}
We start by recalling that
\begin{align*}
|\xi| \widehat{f}_w = |\xi| \widehat{w}_0(\xi) & + \int_1^t  \exp(i|\xi| s) |\xi| \widehat{T}_{m(\xi, \eta)} (\exp(-i|\xi-\eta| s) \widehat{f}_w, \exp(-i|\eta| s) \widehat{f}_w){\psi(\eta)}   \,  ds\\
& +  \int_1^t \int |\xi| \exp(i\phi s) \widehat{f}_w |\eta| \widehat{f}_u {\psi(\eta)}\, d\eta \, ds.
\end{align*}
Differentiating with respect to $\xi$ (and occasionally omitting signs and constants) one has:
\begin{align*}
\partial_\xi (|\xi| \widehat{f}_w ) & \approx \frac{\xi}{|\xi|} \widehat{w}_0 + |\xi| \partial_\xi \widehat{w}_0 + \int_1^t \partial_\xi(\exp(i\Lambda s) |\xi| \widehat{T}_{m(\xi, \eta)} (\exp(-i\Lambda s) \widehat{f}_w, \exp(-i \Lambda s) \widehat{f}_w)){\psi(\eta)} \, ds  \\
& \quad + \int_1^t \int \frac{\xi}{|\xi|} \exp(i\phi s) \widehat{f}_w |\eta| \widehat{f}_u  {\psi(\eta)}\, d\eta \, ds  + \int_1^t \int |\xi| is \partial_\xi \phi  \exp(i\phi s)|\eta| \widehat{f}_w \widehat{f}_u {\psi(\eta)} \, d\eta \, ds\\
&\quad + \int_1^t \int |\xi|  \exp(i\phi s) |\eta| \partial_\xi\widehat{f}_w \widehat{f}_u \, {\psi(\eta)} \, d\eta  \, ds.
\end{align*}
Differentiating again in $\xi$,
\begin{align*}
\partial_{\xi}^2&(|\xi|\widehat{f}_w)=\dfrac{\xi^2}{|\xi|^3}\widehat{w}_0+ 2\dfrac{\xi}{|\xi|}\partial_\xi \widehat{w}_0 + |\xi| \partial^2_\xi \widehat{w}_0 + \int_1^t \partial^2_\xi(\exp(i\Lambda s) |\xi| \widehat{T}_{m(\xi, \eta) } (\exp(-i\Lambda s) \widehat{f}_w, \exp(-i\Lambda s) \widehat{f}_w)) {\psi(\eta)} \, ds  \\
&\quad+ \int_1^t \int s^2 \exp(i\phi s) |\partial_\xi\phi|^2 |\eta| |\xi| \widehat{f}_w \widehat{f}_u {\psi(\eta)}  \, d\eta \, ds  \quad (a) + \int_1^t \int s \exp(i\phi s) \partial_\xi^2 \phi |\xi| |\eta| \widehat{f}_w \widehat{f}_u {\psi(\eta)} \, d\eta \, ds \quad (b) \\
&\quad+ \int_1^t \int \exp(i \phi s) \dfrac{\xi}{|\xi|} \partial_\xi\widehat{f}_w |\eta|  \widehat{f}_u { \psi(\eta)} \, d\eta \, ds \quad (c) + \int_1^t \int s \exp(i \phi s) |\xi| |\eta| \partial_\xi \phi \partial_\xi \widehat{f}_w \widehat{f}_u { \psi(\eta)} \, d\eta \, ds \quad (d) \\
&\quad+ \int_1^t \int is \exp(i\phi s) \partial_\xi \phi |\eta| \dfrac{\xi}{|\xi|} \widehat{f}_w \widehat{f}_u {\psi(\eta)} \, d\eta \, ds \quad (e) + \int_1^t \int \exp(i \phi s) \dfrac{\xi^2}{|\xi|^3} \widehat{f}_w |\eta| \widehat{f}_u { \psi(\eta)} \, d\eta \, ds \quad (f)\\
&\quad+ \int_1^t \int \exp(i\phi s)|\xi| |\eta| \partial_\xi^2 \widehat{f}_w \widehat{f}_u  {\psi(\eta)}\, d\eta \, ds \quad  (g).
\end{align*}

The first integral is studied in \cite{Pusateri}. As recalled in Proposition \ref{prop:PS-nonlinear}, it grows like $t$.
Let us start from (a). We integrate it by parts in $\eta$, so that
\begin{align*}
(a)&\approx \int_1^t \int s \exp(i(|\xi|-|\xi-\eta|)s) |\de_\xi \phi|^2 \frac{\de_\eta \phi}{|\de_\eta \phi|^2} |\xi| \de_\eta \widehat{f}_w \widehat{v} \psi \, d\eta \, ds \quad (a)_1 \\
& \quad + \int_1^t \int s \exp(i(|\xi|-|\xi-\eta|)s) |\de_\xi \phi|^2 \frac{\de_\eta \phi}{|\de_\eta \phi|^2} |\xi| \widehat{f}_w \de_\eta \widehat{v} \psi \, d\eta \, ds \quad (a)_2 \\
& \quad + \int_1^t \int s^2 \exp(i(|\xi|-|\xi-\eta|)s) |\de_\xi \phi|^2 \frac{\de_\eta \phi}{|\de_\eta \phi|^2} |\xi| \widehat{f}_w |\eta| \widehat{v} \psi \, d\eta \, ds \quad (a)_3 \\
&\quad + \int_1^t \int s \exp(i(|\xi|-|\xi-\eta|)s) |\de_\xi \phi|^2 \frac{\de_\eta \phi}{|\de_\eta \phi|^2} |\xi| \widehat{f}_w \widehat{v} \de_\eta\psi \, d\eta \, ds \quad (a)_4\\
& \quad + \int_1^t \int s \exp(i(|\xi|-|\xi-\eta|)s) |\de_\xi \phi|^2 \frac{\de_{\eta\eta} \phi}{|\de_\eta \phi|^2} |\xi| \widehat{f}_w \widehat{v} \psi \, d\eta \, ds \quad (a)_5\\
& \quad + 2 \int_1^t \int s \exp(i(|\xi|-|\xi-\eta|)s) \de_\xi \phi \cdot \de_{\xi \eta} \phi  \frac{\de_{\eta} \phi}{|\de_\eta \phi|^2} |\xi| \widehat{f}_w \widehat{v} \psi \, d\eta \, ds \quad (a)_6\\
& \quad -2 \int_1^t \int s \exp(i(|\xi|-|\xi-\eta|)s)  |\de_\xi \phi|^2 \frac{\de_\eta \phi \cdot \de_{\eta \eta} \phi}{|\de_\eta \phi|^2} |\xi| \widehat{f}_w \widehat{v} \psi \, d\eta \, ds \quad (a)_7.\\
\end{align*}
We proceed with our computations, using the dispersive estimate \eqref{est3-lem1} and Lemma \ref{lem-crucial}. From \eqref{def:symbol-lem21} have
\begin{align*}
|(a)_1|& \lesssim \int_1^t s \|x f_w\|_{W^{1, 2 }} \|v\|_{H^N} \, ds \lesssim M_0^2(t) \int_1^t s^{\gamma+1-\frac 54} \, ds  \lesssim t M_0^2(t). 
\end{align*}
The bounds for $(a)_2-(a)_4$ are similar to \eqref{eq:b} and we omit them.
Consider $(a)_3$. We use again \eqref{est:app-Lp} with $p=(\frac 38-\frac \gamma 2)^{-1}, p'=(\frac 18 + \frac \gamma 2)^{-1}$, so that we have
\begin{align*}
|(a)_3| & \lesssim \int_1^t s^2 \|e^{i\Lambda s} f_w\|_{W^{1, (\frac 38-\frac \gamma 2)^{-1}}} \||\eta| \widehat{v}\|_{H^N} \, ds \\
& \lesssim \int_1^t s^{2-\frac 1 4 - \gamma }  \|\Lambda^{\frac 12+ 2 \gamma }  f_w\|_{W^{1, (\frac 18 + \frac \gamma 2)^{-1}}}  \||\eta| \widehat{v}\|_{H^{N}} \, ds \\
& \lesssim \int_1^t s^{2-\frac 1 4 - \gamma }  \| x  f_w\|_{H^2}  \||\eta| \widehat{v}\|_{H^{N}} \, ds \\
& \lesssim M_0(t)^2 \int_1^t s^{2-\frac 14-\gamma - \frac 74 + \gamma} \, ds \lesssim t M_0^2(t).
\end{align*}
Using Lemma \ref{lem-crucial}, the last two terms $(a)_5-(a)_7$ are really analogous to $(i)_5$ in \eqref{est:i5}. In particular the symbol in $(a)_5$ is treated in \eqref{def:symbol-lem31} and the symbol in $(a)_6$ in \eqref{def:symbol-lem5} and the symbol in $(a)_7$ is studied in \eqref{def:symbol-lem41}.
Now consider (b), which gives
\begin{align*}
\partial^2_\xi \phi= \frac{\mu_0}{|\xi|} + \frac{\tilde \mu_0}{ |\xi-\eta|}.
\end{align*}
Therefore 
\begin{align*}
\partial^2_\xi \phi |\xi-\eta||\xi| = \mu_0 |\xi-\eta| + \tilde \mu_0 |\xi| = \mu_1 \in \mathcal{B}_1. 
\end{align*} 
Using this observation in the estimate for $(b)$, Lemma \ref{lem3} and \eqref{lemC3} with $\alpha=1, q=6, p=2$, we have:
\begin{align*}
\|(b)\|_{L^2}  & \lesssim \int_1^t s \|T_{\mu_1} (\exp(i\Lambda s)\Lambda^{-1}\exp(-i\Lambda s) f_w,  \exp(\Delta s) \Lambda f_u)\|_{L^2} \, ds\\
& \lesssim  \int_1^t s \|\Lambda^{-1} \exp(-i\Lambda s) f_w\|_{W^{1,6}}  \|\exp(\Delta s) \Lambda f_u)\|_{W^{1,3}} \, ds \\
&\quad  +  \int_1^t s \| \exp(-i\Lambda s) f_w\|_{W^{1,3}}  \|\Lambda^{-1} \exp(\Delta s) \Lambda f_u)\|_{W^{1,6}} \, ds\\
& \lesssim  M_0^2(t)  \int_1^t s^{1+\gamma-\frac{5}{4}} \, ds  \lesssim t^{\frac{3}{4}+\gamma}M_0^2(t) .
\end{align*}

Note that (c) and (e) are similar. Let us consider (d). The symbol $$\mu_1(\xi, \eta)=|\xi| \partial_\xi \phi  \in \mathcal{B}_1.$$
\begin{align*}
\|(d)\|_{L^2} &\lesssim \int_1^t s \|\exp(i\Lambda s) T_{\mu_1}( \exp(-i \Lambda s) xf_w, \exp(\Delta s) \Lambda f_u)\|_{L^2} \, ds \\
&\lesssim \int_1^t s \|\exp(-i\Lambda s) x f_w\|_{W^{1, 6}} \|\exp(\Delta s) \Lambda f_u)\|_{W^{1,3}} \, ds \\
&\lesssim  M_0^2(t) \int_1^t s^{1+\gamma-\frac{5}{4}} \,ds \lesssim   s^{\frac{3}{4}+\gamma} M_0^2(t).
\end{align*}

We estimate (f). Notice that $\frac{\xi^2}{|\xi|^3} = \frac{\mu_0}{|\xi|}$. Therefore, using \eqref{lemC3} with $\alpha=1, q=2, p=\frac65$, we have
\begin{align*}
\|(f)\|_{L^2} &\lesssim \int_1^t s \|\Lambda^{-1} \exp(i\Lambda s) T_{\mu_0}( \exp(-i\Lambda s) f_w, \exp(\Delta s) \Lambda f_u)\|_{L^2} \, ds \\
&\lesssim \int_1^t s \|\exp(i\Lambda s) T_{\mu_0}( \exp(-i\Lambda s) f_w, \exp(\Delta s) \Lambda f_u) \|_{L^\frac{6}{5}} \, ds \\
&\lesssim \int_1^t s \|\exp(-i\Lambda s) f_w \|_{L^3} \| \exp(\Delta s) \Lambda f_u\|_{L^2} \, ds \\
&\lesssim  M_0^2(t) \int_1^t s^{1-\frac{1}{3}+\gamma-\frac{5}{4}} \, ds \lesssim t^{\frac{5}{12}+\gamma} M_0^2(t),
\end{align*}
where we used Lemma \ref{lem2}, Lemma \ref{lem3} and Lemma \ref{lem1}. The last one is (g).
\begin{align*}
\|(g)\|_{L^2} & \lesssim \int_1^t \|\exp(i \Lambda s)\Lambda   \exp(-i \Lambda s) x^2 f_w, \exp(\Delta s) \Lambda f_u\|_{L^2} \, ds \\
&\lesssim  \int_1^t \|x^2 \Lambda  f_w\|_{L^{6}} \|\exp(\Delta s) \Lambda f_u\|_{L^3} \, ds \\
&\lesssim  M_0(t) \int_1^t \| \Lambda x^2 f_w\|_{H^1} s^{-\frac{5}{4}} \, ds\\
& \lesssim  M_0^2(t) \int_1^t s^{1-\frac{5}{4}} \, ds \lesssim t^\frac{3}{4} M_0^2(t).
\end{align*}

\subsection{The high frequency regime}\label{sec:highfreq}
This is the regime where $|\eta|$ is at high frequencies: more precisely, we are dealing with the estimates of the second addend in \eqref{int-last-fw} where $|\eta| > \frac 14$.

As already widely mentioned, the only difficult term for which we need a careful and explicit estimate is the integral \eqref{int-last-fw}.
We recall once more the expression of the time phase $\phi(\xi, \eta)$ in \eqref{def:phase-low-frequencies},
\begin{align*}
i\phi(\xi, \eta)=i(|\xi|-|\xi-\eta|) - \frac 12 + \frac 12 \sqrt{1-4|\eta|^2}.
\end{align*}
From Lemma \ref{lem:cutoff} we have that $\mathcal{R}(i\phi)=-c$, where $\mathcal{R}$ denotes the real part. Being $\phi$ the time phase, this implies that the term $\exp (i\phi(\eta, \xi) s)$ in the integrand of \eqref{int-last-fw} does not only have an oscillating contribution, but also an exponentially decaying one, of the order of $\exp(-c s)$. This analysis largely simplifies the estimates of the high-frequency regime. 
We remark that an exponential decay in time should in fact be expected from the linear study of the dissipative part of the system \eqref{PK_Green_function_decomposition}, where we showed that in the high frequency regime $|\eta| > a$, we have
\begin{align*}
\begin{pmatrix}
\widehat{u}\\
\widehat{v}
\end{pmatrix}=
\begin{pmatrix}
O(|\xi|^2) & |\xi| \\
|\xi| & 1+O(|\xi|^2)
\end{pmatrix}
\begin{pmatrix}
\widehat{f}_u\\
\widehat{f}_v
\end{pmatrix}
\exp (-ct),
\end{align*}
where now $c$ is a generic positive constant. 
Thus we explicitly write down the reasoning for the most difficult bound of the previous section, i.e. $\|x f_w\|_L^2$, in the high-frequency regime, in order to show that it is readily obtained from the previous observations. Being simpler, the other estimates for this high frequency terms are omitted.
Therefore we differentiate \eqref{int-last-fw} again with respect to $\xi$ as done in \eqref{eq:spacenorm} and we focus on the term (i), for which we recall that the decay of the linear semigroup was not enough to close the bootstrap argument and in the low-frequency regime we were forced to exploit the oscillations by means of integration by parts in space. 
Thanks to the exponential decay, in the present high-frequency regime it is enough to apply a crude Cauchy-Schwarz inequality. %{Note in particular that $\widetilde \phi(\xi, \eta)=|\xi| \pm |\xi-\eta| \pm |\eta|$. Its $\xi$ derivative $\partial_\xi \widetilde \phi$ is a combination of symbols of the Riesz transform and therefore it is one of the main examples of symbols belonging to the class $\mathcal{B}_0$ (see for instance \cite{Pusateri} for further details).}
{
More precisely, let us consider the quantity
\begin{align*}
& \|v(t)\|_{H^N}^{\rm{h}}, \quad \text{where, for any} \quad f,   \quad \text{we define} \quad 
 \|f\|_{H^N}^{\rm{h}}:=\|(1+\Lambda(\xi))^\frac N 2 \widehat{f}(\xi)(1-\psi(\xi))\|_{L^2},
\end{align*}
where $\psi(\xi)$ is given by \eqref{def:cutoff}(i.e. a cut-off at high-frequency).
It is immediate to deduce from Proposition \ref{prop:decay-SK}, point v),  that the following high-frequency estimate holds for the unknown $v$ of system \eqref{PK_system_w}: 
\begin{align*}
\|v(t)\|_{H^N}^{\rm{h}} & \le C E_N \exp(-ct) + C \int_1^t \exp(-c(t-s)) \|v(s)\|_{L^2}^2 \, ds \\
& \le  C E_N \exp(-ct) + CM_0^2(t) \int_1^t  \exp(-c(t-s)) s^{-\frac 52} \, ds \\
& \lesssim   E_N  \exp(-ct) + t^{-\frac 52} {M}_0^2(t),
\end{align*}
so that 
\begin{align}\label{est:highfreq}
t^\frac 52 \|v(t)\|_{H^N}^{\rm{h}} \lesssim  E_N + {M}_0^2(t).
\end{align}
Now we use this information to estimate the (high-frequency) term (i) in \eqref{eq:spacenorm}, which we rewrite below
\begin{align*}
& \int_1^t \int is \partial_\xi \phi  \exp(i\phi s) \widehat{f}_w(\xi-\eta, s) |\eta| \widehat{f}_u(\eta, s)  (1-\psi(\eta)) \, d\eta \, ds.
\end{align*}
%& \quad \approx \int_1^t s\exp(-cs)  \int   \partial_\xi \widetilde \phi  \exp(i\widetilde \phi s) \widehat{f}_w(\xi-\eta, s) |\eta| \widehat{f}_u(\eta, s)  (1-\psi(\eta)) \, d\eta \, ds,
%\end{align*}
%where $\widetilde \phi=\mathcal{R}(\phi)$. 
%%%%%
The task is to estimate this term in $L^2$, with an upper bound of the order of $t^\gamma$, according to the bootstrap estimate in \eqref{X_norm}. 
First, we can apply Lemma \ref{lem3} to $\mu_0(\xi, \eta)=\de_\xi  \phi(\xi, \eta)$, which yields 
\begin{align*}
\|(i)\|_{L^2} & \lesssim  \int_1^t   s\|T_{\mu_0 (1-\psi) } (\widehat{w}(\xi-\eta, s), \widehat{v}(\eta, s))\|_{L^2} \, ds \, {\lesssim \int_1^t s  \|w(s)\|_{H^N} \|v(s)\|^{\text{h}}_{H^N} \, ds} \\
& \lesssim  M_0(t) (E_N+{M}_0^2(t)) \int_1^t  s^{1+\eps} \times s^{-\frac 52}   \, ds \lesssim M_0(t) (E_N+{M}_0^2(t)),
\end{align*}
where the last inequalities follow by using \eqref{est:highfreq} and the definition of $M_0(t)$ in \eqref{X_norm}.
Finally, the argument is concluded, choosing $\eps$ small enough, by the following inequalities:
\begin{align*}
M_0(t) \lesssim E_N + t^\frac 52 \|v(t)\|^{\rm{h}}_{H^N}  M_0(t) + M_0(t)^2 \lesssim  E_N (1+M_0(t)) + M_0^2(t)\lesssim \eps (1+M_0(t)) + M_0^2(t).
\end{align*} 
 }

\appendix 
\section{Toolbox}
First, we collect two estimates on the wave propagator $\exp(i\Lambda t)$.

\begin{Lem}[\cite{Pusateri}]\label{lem1}
The following estimates on the wave propagator $\exp(i\Lambda t)$ hold:
\begin{align}
\label{est:app-infty}
|\exp(i\Lambda t)f|_{L^\infty} & \lesssim t^{-1} [\|f\|_{\dot{W}^{2,1}}+\|\Lambda f\|_{\dot{W}^{1,1}}]; \\
\label{est:app-Lp}
\|\exp(i\Lambda t)f\|_{L^p} & \lesssim  t^{-1+\frac{2}{p}} \|\Lambda^{2-\frac{4}{p}}f\|_{L^{p'}}, \quad \text{for  }  2 \le p < \infty.
\end{align}
We also have  the following bounds:
\begin{align}
\label{est3-lem1}
\|w\|_{W^{1,p}} & \lesssim  t^{-1+\frac{2}{p}}M_0(t), \quad \text{for  } 2 \le p \le 4;\\
\label{est4-lem1}
\|\Lambda^{-1} w\|_{W^{1,p}} & \lesssim  t^{\gamma-1+\frac{2}{p}}M_0(t), \quad \text{for  } 4 \le p < 6.
\end{align}
\end{Lem}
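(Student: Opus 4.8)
The plan is to regard \eqref{est:app-infty} and \eqref{est:app-Lp} as the two genuinely analytic statements — fixed-time dispersive bounds for the half-wave propagator $e^{i\Lambda t}=e^{it|D|}$ in $\mathbb{R}^3$ — and then to read off \eqref{est3-lem1} and \eqref{est4-lem1} as corollaries obtained by feeding the weighted profile quantities contained in the definition of $M_0(t)$ into \eqref{est:app-Lp}. Since the estimates are quoted from \cite{Pusateri}, at the level of a proof one may simply invoke their argument; what follows is the reconstruction I would carry out to match their normalization.

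First I would prove the $L^1\to L^\infty$ bound \eqref{est:app-infty} by combining a Littlewood–Paley decomposition with stationary phase. Writing $e^{i\Lambda t}P_k f$ as the oscillatory integral with phase $x\cdot\xi+t|\xi|$ restricted to the dyadic shell $|\xi|\sim 2^k$, a rescaling to unit frequency together with the standard three-dimensional stationary-phase bound $(1+s)^{-1}$ for the unit-frequency kernel (the two transverse directions of the cone are nondegenerate) yields the frequency-localized estimate $\|e^{i\Lambda t}P_k f\|_{L^\infty}\lesssim 2^{3k}\min\{1,(2^kt)^{-1}\}\,\|P_k f\|_{L^1}$, where I use that the Littlewood–Paley projectors act boundedly on $L^1$. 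Summing over $k$ and splitting the sum at the threshold $2^k t\sim 1$, the factor $t^{-1}$ can be extracted in both regimes at the cost of the weight $2^{2k}$, so that the right-hand side is controlled by the combination of homogeneous norms $\|f\|_{\dot{W}^{2,1}}+\|\Lambda f\|_{\dot{W}^{1,1}}$.

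Next, \eqref{est:app-Lp} follows by complex interpolation between the $L^2$ isometry $\|e^{i\Lambda t}f\|_{L^2}=\|f\|_{L^2}$ and the decay just established: interpolating the pair $(L^2\to L^2,\ 1)$ and $(L^1\to L^\infty,\ t^{-1})$ at the exponent with $1/p=(1-\theta)/2$ gives the rate $t^{-\theta}=t^{-1+2/p}$ and, tracking the two derivatives lost in \eqref{est:app-infty} through the interpolation scale, the fractional loss $\Lambda^{2-4/p}$ on the $L^{p'}$ side. To obtain \eqref{est3-lem1} and \eqref{est4-lem1} I would then apply \eqref{est:app-Lp} (and its first-order version, commuting $\nabla$ through $e^{i\Lambda t}$) to $w=e^{i\Lambda t}f_w$, respectively to $\Lambda^{-1}w$, and bound the resulting $L^{p'}$ norms of (fractional derivatives of) $f_w$ by the weighted entries of $M_0(t)$ through the weighted Sobolev embedding $L^2(\langle x\rangle\,dx)\hookrightarrow L^{p'}(\mathbb{R}^3)$, which holds exactly for $6/5<p'\le 2$, i.e.\ $2\le p<6$. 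For \eqref{est3-lem1} the derivative count is absorbed by the entry $\|\Lambda x f_w\|_{H^1}$, which carries weight one in $M_0(t)$, giving the clean rate $t^{-1+2/p}M_0(t)$ on the range $2\le p\le 4$; for \eqref{est4-lem1} the extra smoothing $\Lambda^{-1}$ lets one push $p$ up toward $6$ but forces the use of $\|xf_w\|_{L^2}\lesssim t^{\gamma}M_0(t)$, which is precisely the origin of the factor $t^{\gamma}$.

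The step I expect to be most delicate is the dyadic summation in the proof of \eqref{est:app-infty}: one must account separately for the regimes $2^kt\gtrless 1$ and control the low-frequency tail (where the $\Lambda$-weight is what guarantees summability and the uniform $t^{-1}$ rate), so that the right-hand side collapses to exactly the stated combination $\|f\|_{\dot{W}^{2,1}}+\|\Lambda f\|_{\dot{W}^{1,1}}$ rather than a single stronger norm. Getting the homogeneity bookkeeping right there is the real work; the interpolation and the passage to \eqref{est3-lem1}--\eqref{est4-lem1} are then routine given the weighted embeddings, and in practice I would simply cite \cite{Pusateri} for \eqref{est:app-infty} and \eqref{est:app-Lp}.
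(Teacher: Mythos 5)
You should first note that the paper itself offers no proof of this lemma: it is quoted verbatim from \cite{Pusateri}, so your final decision to ``simply cite \cite{Pusateri}'' is exactly what the authors do, and at that level your proposal is consistent with the paper. The problem is the reconstruction you offer for \eqref{est:app-infty}, which contains a genuine gap. The Littlewood--Paley/stationary-phase route gives the frequency-localized bound $\|e^{i\Lambda t}P_kf\|_{L^\infty}\lesssim 2^{3k}\min\{1,(2^kt)^{-1}\}\|P_kf\|_{L^1}$, and the loss is in the $\ell^1$ sum over $k$: in the regime $2^kt\le 1$ the geometric factor $2^k\le t^{-1}$ lets you conclude with only $\sup_k 2^{2k}\|P_kf\|_{L^1}$, i.e.\ a $\dot B^2_{1,\infty}$ bound, which is indeed controlled by $\|f\|_{\dot W^{2,1}}$; but in the regime $2^kt\ge 1$ there is no gain left, and you need the full sum $\sum_k 2^{2k}\|P_kf\|_{L^1}$, i.e.\ the homogeneous Besov norm $\dot B^2_{1,1}$. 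For $L^1$-based spaces the embeddings go the wrong way ($\dot B^2_{1,1}\hookrightarrow\dot W^{2,1}\hookrightarrow\dot B^2_{1,\infty}$, both strict, since Riesz transforms are unbounded on $L^1$), so the claimed ``collapse'' of the dyadic sum to $\|f\|_{\dot W^{2,1}}+\|\Lambda f\|_{\dot W^{1,1}}$ is not justified; note also that you locate the delicate regime at low frequencies, whereas it is precisely the high-frequency half of the sum that does not close.

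The estimate with exactly the stated right-hand side is the classical von Wahl/Kirchhoff bound, and the natural proof bypasses dyadic decomposition altogether: in $\R^3$, write $e^{i\Lambda t}f=\cos(t\Lambda)f+i\sin(t\Lambda)f$ and use strong Huygens, namely $\sin(t\Lambda)f=\tfrac{1}{4\pi t}\int_{|y-x|=t}(\Lambda f)(y)\,dS(y)$ and $\cos(t\Lambda)f=\partial_t\bigl(\tfrac{1}{4\pi t}\int_{|y-x|=t}f(y)\,dS(y)\bigr)$. Writing $g(x+t\omega)=-\int_t^\infty\partial_rg(x+r\omega)\,dr$ (once for the sine term, twice for the cosine term) and inserting the Jacobian $r^2$ converts the surface integrals into solid integrals, which yields $|\sin(t\Lambda)f|\lesssim t^{-1}\|\Lambda f\|_{\dot W^{1,1}}$ and $|\cos(t\Lambda)f|\lesssim t^{-1}\|f\|_{\dot W^{2,1}}$; this also explains why two distinct second-order norms appear (they are the position and velocity data of the associated wave equation). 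Your remaining steps are sound: \eqref{est:app-Lp} follows from Stein interpolation of the analytic family $e^{i\Lambda t}\Lambda^{-z}$ between the $L^2$ isometry and the $L^1\to L^\infty$ endpoint, and your derivation of \eqref{est3-lem1}--\eqref{est4-lem1} from \eqref{est:app-Lp} --- with the weighted embedding $L^2(\langle x\rangle)\hookrightarrow L^{p'}$ forcing $p'>6/5$ (hence $p<6$), the entry $\|\Lambda xf_w\|_{H^1}$ giving the clean rate for $p\le 4$, and $\|xf_w\|_{L^2}\le t^\gamma M_0(t)$ producing the $t^\gamma$ for $4\le p<6$ --- matches the bookkeeping in \cite{Pusateri}.
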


We recall two estimates on fractional integration, see \cite{Pusateri, GMS1} for more details.

\begin{Lem}[\cite{Pusateri}]\label{lem2} Let $\Lambda=(-\Delta)^\frac 12$. Then, for $0<\alpha < \frac{3}{p}$,
\begin{align}
\label{ineq:frac-int1}
\Bigg\|\frac{1}{\Lambda^\alpha} f\Bigg\|_{L^q} & \lesssim  \|f\|_{L^p} \quad \text{for    } 1<p,q<\infty \; \text{and } \alpha=\frac{3}{p}-\frac{3}{q};\\
\label{ineq:frac-int2}
\Bigg\|\frac{1}{\Lambda^\alpha} \exp(i\Lambda t) f\Bigg\|_{L^q} & \lesssim \|f\|_{L^p} \quad \text{for    } 1<p \le 2 \le q<\infty \; \text{and } \alpha=\frac{3}{p}-\frac{3}{q}.
\end{align}
\end{Lem}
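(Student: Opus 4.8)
The plan is to handle the two inequalities by different means, since only the second genuinely involves the wave propagator. Inequality \eqref{ineq:frac-int1} is the Hardy--Littlewood--Sobolev inequality in disguise: in $\mathbb{R}^3$ the operator $\Lambda^{-\alpha}=(-\Delta)^{-\alpha/2}$ is the Riesz potential, i.e.\ convolution with a constant multiple of $|x|^{\alpha-3}$. The hypotheses $\alpha=3/p-3/q$ and $0<\alpha<3/p$ translate precisely into $1<p<q<\infty$ and $0<\alpha<3$, which is the admissible range of Hardy--Littlewood--Sobolev, so $\|\Lambda^{-\alpha}f\|_{L^q}\le C\|f\|_{L^p}$ follows directly. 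If a self-contained argument is preferred, I would recover it from the weak-type $(p,q)$ estimate for convolution with $|x|^{\alpha-3}$ together with Marcinkiewicz interpolation.

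For inequality \eqref{ineq:frac-int2}, where the unimodular but non-multiplier symbol $e^{i|\xi|t}$ appears, I would first remove the time dependence by scaling. Writing $D_t g(x)=g(x/t)$, the dilation intertwines the wave group and the fractional derivative, namely $e^{i\Lambda t}=D_t\,e^{i\Lambda}\,D_{1/t}$ and $\Lambda^{-\alpha}D_t=t^{\alpha}D_t\,\Lambda^{-\alpha}$, so that
\[
\Lambda^{-\alpha}e^{i\Lambda t}=t^{\alpha}\,D_t\,\big(\Lambda^{-\alpha}e^{i\Lambda}\big)\,D_{1/t}.
\]
Taking $L^q$ norms and using $\|D_t g\|_{L^q}=t^{3/q}\|g\|_{L^q}$ and $\|D_{1/t}f\|_{L^p}=t^{-3/p}\|f\|_{L^p}$, the total power of $t$ is $\alpha+3/q-3/p$, which vanishes exactly because $\alpha=3/p-3/q$. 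Hence the estimate is scale invariant and it suffices to prove the fixed-time case $\|\Lambda^{-\alpha}e^{i\Lambda}f\|_{L^q}\le C\|f\|_{L^p}$.

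To treat the fixed-time operator I would use a Littlewood--Paley decomposition $f=\sum_j P_j f$ together with the dispersive properties of the wave group already exploited in Lemma \ref{lem1}. On a dyadic block $|\xi|\sim 2^j$ the symbol of $\Lambda^{-\alpha}e^{i\Lambda}$ equals $2^{-j\alpha}$ times a unimodular factor, while the oscillatory part obeys $\|e^{i\Lambda}P_jf\|_{L^2}=\|P_jf\|_{L^2}$ and, thanks to the curvature of the light cone (the stationary-phase decay of the Fourier transform of the surface measure of the unit sphere), the improved bound $\|e^{i\Lambda}P_jf\|_{L^\infty}\lesssim 2^{2j}\|P_jf\|_{L^1}$ in place of the trivial $2^{3j}$. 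Interpolating these, incorporating the smoothing factor $2^{-j\alpha}$, and reassembling the blocks through the Littlewood--Paley square function (legitimate for $1<p,q<\infty$) yields the claim in the stated range $1<p\le2\le q<\infty$. Conceptually, $\Lambda^{-\alpha}e^{i\Lambda}$ is a Fourier integral operator of order $-\alpha$ associated with the wave flow at time one, and the assertion is an instance of the $L^p\to L^q$ theory of such operators combined with Sobolev embedding.

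I expect the fixed-time oscillatory estimate to be the main obstacle. Since $e^{i\Lambda}$ is unbounded on $L^q$ for $q\neq2$, the entire gain must be extracted from the curvature of the cone, and this dispersive gain has to be balanced against the $\Lambda^{-\alpha}$ smoothing so that the reassembled pieces converge at both ends of the frequency scale: the high-frequency convergence relies on the one-derivative gain $(d-1)/2=1$ furnished by the curvature, while the low-frequency convergence is exactly what forces $\alpha<3/p$, i.e.\ $q<\infty$ --- the same endpoint restriction already visible in the Hardy--Littlewood--Sobolev inequality \eqref{ineq:frac-int1}. A naive triangle-inequality summation over blocks would be too lossy at low frequencies, so the square-function reassembly is essential here.
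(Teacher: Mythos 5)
Your treatment of \eqref{ineq:frac-int1} is correct and is exactly the standard argument (this is the Hardy--Littlewood--Sobolev/Riesz potential estimate, with $0<\alpha<3/p$ encoding $p<q<\infty$), and your scaling reduction of \eqref{ineq:frac-int2} to $t=1$ is valid. But for the second estimate you have missed the mechanism that actually makes it true, and the route you propose instead has genuine gaps. The lemma is quoted from \cite{Pusateri}, where \eqref{ineq:frac-int2} is an immediate consequence of \eqref{ineq:frac-int1}: since $1<p\le 2\le q<\infty$, split $\alpha=\alpha_1+\alpha_2$ with $\alpha_1=\frac{3}{p}-\frac{3}{2}$ and $\alpha_2=\frac{3}{2}-\frac{3}{q}$, and use that $\Lambda^{-\alpha_i}$ commutes with $e^{i\Lambda t}$, which is \emph{unitary} on $L^2$:
\begin{equation*}
\Bigl\|\Lambda^{-\alpha}e^{i\Lambda t}f\Bigr\|_{L^q}
\lesssim \bigl\|e^{i\Lambda t}\Lambda^{-\alpha_1}f\bigr\|_{L^2}
= \bigl\|\Lambda^{-\alpha_1}f\bigr\|_{L^2}
\lesssim \|f\|_{L^p},
\end{equation*}
applying \eqref{ineq:frac-int1} twice (when $p=2$ or $q=2$ one of the two steps is vacuous). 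No dispersion, curvature of the cone, or scaling argument is needed, and the bound is uniform in $t$ for free; the restriction $1<p\le 2\le q<\infty$ is precisely the footprint of factoring through $L^2$. Your assertion that ``the entire gain must be extracted from the curvature of the cone'' is therefore wrong in principle, not just in execution.

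As for your Littlewood--Paley/dispersive scheme, it does not close as written. First, the fixed-time block bound $\|e^{i\Lambda}P_jf\|_{L^\infty}\lesssim 2^{2j}\|P_jf\|_{L^1}$ fails for low frequencies: for $2^j\lesssim 1$ the stationary-phase gain is absent at time $t=1$ (the correct bound is $\sim 2^{3j}(1+2^j)^{-1}$, i.e.\ only the trivial $2^{3j}$ when $j\le 0$). Second, and more seriously, since $\alpha=\frac{3}{p}-\frac{3}{q}$ is exactly critical for the scaling, after inserting the factor $2^{-j\alpha}$ your interpolated per-block $L^p\to L^q$ bounds are $O(1)$ uniformly in $j$, with no exponential gain in either frequency direction; neither the triangle inequality nor the square-function reassembly you invoke can then sum the blocks, because passing between $\|(\sum_j|e^{i\Lambda}P_jf|^2)^{1/2}\|_{L^q}$ and the scalar per-block estimates requires vector-valued bounds for the modulated pieces $e^{i\Lambda}P_j$ that you have not established --- this is exactly where the unboundedness of $e^{i\Lambda}$ on $L^q$, $q\neq 2$, resurfaces. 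So part \eqref{ineq:frac-int2} of your proposal is not a proof; the two-line factorization through $L^2$ above is what the cited reference does and what you should use.
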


We conclude this appendix section with the generalized version of the H{\"o}lder inequality for Coifman-Meyer operators and symbols in $\mathcal{B}_s$.

\begin{Lem}[\cite{CM}]\label{lemCF}
if a symbol $m(\xi, \eta)$ belongs to the Coifman-Meyer class as in Definition \ref{def:CM}, then, for $\frac{1}{r}=\frac{1}{p}+\frac{1}{q}, \; 1<p,q \le \infty, \, 0 < r < \infty$.
$$\|T_m (f, g)\|_{L^r} \lesssim \|f\|_{L^p} \|g\|_{L^q}.$$
\end{Lem}

\begin{Lem}[\cite{Pusateri}]\label{lem3}
Let $p, q, r$ be such that $\frac{1}{r}=\frac{1}{p}+\frac{1}{q}$ and $1<p, q, r < \infty$. 
If $m(\xi, \eta) \in \mathcal{B}_0$ as in Definition \ref{def:symbols}, then:
$$\|T_m (f, g)\|_{L^r} \lesssim \|f\|_{L^p} \|g\|_{L^q}.$$
If $m(\xi, \eta) \in \mathcal{B}_s, \, s>0$ as in Definition \ref{def:symbols}, then:
$$\|\Lambda^k T_m (f, g)\|_{L^r} \lesssim \|f\|_{ W^{s+k, p}} \|g\|_{L^q} + \|f\|_{L^p} \|g\|_{ W^{s+k, q}}.$$
\end{Lem}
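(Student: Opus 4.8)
The statement is a Coifman--Meyer-type bound for the bilinear multiplier $T_m$ of \eqref{def-pseudoproduct}, adapted to the homogeneous class $\mathcal{B}_s$ of Definition \ref{def:symbols}. The plan is to run the standard bilinear Littlewood--Paley / paraproduct argument; the only point beyond the classical theorem is the singularity of $m$ on $\{\xi=0\}\cup\{\xi-\eta=0\}\cup\{\eta=0\}$, which is handled by the third property in Definition \ref{def:symbols}.

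First I would absorb $\Lambda^k$ into the symbol. Since $|\xi|^k$ is homogeneous of degree $k$ and smooth away from $\{\xi=0\}$, the product $|\xi|^k m$ again satisfies the requirements of Definition \ref{def:symbols} with degree $s+k$; hence $\Lambda^k T_m=T_{|\xi|^k m}$ and it is enough to prove, for a general symbol $\mu\in\mathcal{B}_\sigma$ with $\sigma=s+k>0$, the estimate $\|T_\mu(f,g)\|_{L^r}\lesssim\|f\|_{W^{\sigma,p}}\|g\|_{L^q}+\|f\|_{L^p}\|g\|_{W^{\sigma,q}}$.

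Next I would decompose $f=\sum_j P_j f$ and $g=\sum_{j'} P_{j'} g$ by a homogeneous Littlewood--Paley partition and split the double sum according to the relative sizes of the three frequencies $\xi,\xi-\eta,\eta$ into the balanced region, where all three are comparable, and the two unbalanced regions, where one input frequency dominates. On each dyadic block I would rescale by the dominant scale $2^\ell$ to bring the symbol to the unit annulus; joint homogeneity of degree $\sigma$ then extracts a factor $2^{\sigma\ell}$ and reduces matters to a unit-scale symbol, to which the classical bilinear Coifman--Meyer (bilinear H\"ormander--Mikhlin) theorem applies, giving $L^p\times L^q\to L^r$ boundedness under $\tfrac1r=\tfrac1p+\tfrac1q$, $1<p,q,r<\infty$. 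In the balanced region the finitely many symbol-derivative bounds needed for Coifman--Meyer follow directly from smoothness of $\mu$ away from the origin. In the unbalanced regions, where the small frequency $\xi_1\to0$, the Cartesian singularity of $\mu$ on the hyperplanes would destroy these bounds, and this is exactly where I would use the representation $\mu=\mathcal{A}(|\xi_1|,\xi_1/|\xi_1|,\xi_2)$ with $\mathcal{A}$ smooth to verify that all the required derivatives stay uniformly bounded after rescaling.

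Finally I would sum the dyadic series, placing the factor $2^{\sigma\ell}$ on whichever input carries the dominant frequency: in the region where $f$ is the high-frequency factor the square-function characterization of $W^{\sigma,p}$ reassembles $\|f\|_{W^{\sigma,p}}\|g\|_{L^q}$, the symmetric unbalanced region gives $\|f\|_{L^p}\|g\|_{W^{\sigma,q}}$, and the balanced (high--high) contribution is dominated by either term. I expect the main obstacle to be precisely the unbalanced regime: reconciling the global non-smoothness of $\mu$ on the three degenerate sets with the smooth-symbol hypothesis of Coifman--Meyer, which is what the structural condition in Definition \ref{def:symbols} is designed to repair, together with the bookkeeping that legitimately moves all $\sigma$ derivatives onto a single factor; keeping $r$ strictly inside $(1,\infty)$ during the summation also needs the usual care. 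The complete argument is carried out in \cite{Pusateri}.
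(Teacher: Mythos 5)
The paper does not prove this lemma at all: it is imported verbatim from Pusateri--Shatah (the bracketed citation \cite{Pusateri} in the statement is the entire ``proof''), so there is no internal argument for your sketch to diverge from. Your outline --- absorbing $\Lambda^k$ into the symbol, dyadic decomposition into balanced and unbalanced frequency regions, rescaling plus Coifman--Meyer on each block with the structural condition of Definition \ref{def:symbols} repairing the singularities on $\{\xi=0\}\cup\{\xi-\eta=0\}\cup\{\eta=0\}$, and square-function resummation into the Sobolev norms --- is the standard and correct route, consistent with how the bound is established in the cited reference, which you appropriately defer to for the details.
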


\begin{lem}[Lemma C.3, \cite{GMS1}]\label{lemC3}
If $\mu_s(\xi, \eta) \in \mathcal{B}_s, \mu_{s'}(\xi, \eta) \in \mathcal{B}_{s'}$, then $\mu_s \mu_{s'} \in \mathcal{B}_{s+s'}$.
\end{lem}

\section*{Acknowledgements}
We are grateful to Fabio Pusateri for reading carefully a first version of this manuscript. RB thanks David Lannes for some helpful discussions. RN thanks Bernard Hanouzet who constantly inspired his research for more than 35 years: this paper is just another fruit of this influence. 

\section*{Conflict of interest}
The authors declare that they have no conflict of interest.

\bibliographystyle{plain}

\end{document}